\newtheorem{thm}{Theorem}[section]
\newtheorem{lem}[thm]{Lemma}
\newtheorem{pro}[thm]{Proposition}
\newtheorem{ex}[thm]{Example}
\newtheorem{rmk}[thm]{Remark}
\newtheorem{defi}[thm]{Definition}
\newcommand {\emptycomment}[1]{}
\newcommand{\lon }{\,\rightarrow\,}
\newcommand{\be }{\begin{equation}}
\newcommand{\ee }{\end{equation}}
\newcommand{\g}{\mathfrak g}
\newcommand{\h}{\mathfrak h}
\newcommand{\huaB}{\mathcal{B}}
\newcommand{\huaP}{\mathcal{P}}
\newcommand{\huaC}{{\mathcal{C}}}
\newcommand{\huaH}{\mathcal{H}}
\newcommand{\huaZ}{\mathcal{Z}}
\newcommand{\frkH}{\mathfrak H}
\newcommand{\Id}{{\rm{Id}}}
\newcommand{\br}[1]{   [ \cdot,    \cdot  ]   }
\newcommand{\Hom}{\mathrm{Hom}}
\newcommand{\gl}{\mathfrak {gl}}
\newcommand{\ad}{\mathrm{ad}}
\newcommand{\Li}{\mathsf{3Lie}}
\begin{document}

\title[Cohomology and the controlling algebra of crossed homomorphisms on $3$-Lie algebras]{Cohomology and the controlling algebra of crossed homomorphisms on $3$-Lie algebras}

\author{Shuai Hou}
\address{Department of Mathematics, Jilin University, Changchun 130012, Jilin, China}
\email{houshuai19@mails.jlu.edu.cn}

\author{Meiyan Hu}
\address{Department of Mathematics, Jilin University, Changchun 130012, Jilin, China}
\email{hmy21@mails.jlu.edu.cn}

\author{Lina Song}
\address{Department of Mathematics, Jilin University, Changchun 130012, Jilin, China}
\email{songln@jlu.edu.cn}

\author{Yanqiu Zhou}
\address{School of Science, Guangxi University of Science and Technology, Liuzhou 545006, China}
\email{zhouyanqiunihao@163.com}


\begin{abstract}
In this paper, first we give the notion of a crossed homomorphism on a $3$-Lie algebra with respect to an action on another $3$-Lie algebra, and characterize it using a homomorphism from a Lie algebra to the semidirect product Lie algebra. We also establish the relationship between crossed homomorphisms and relative Rota-Baxter operators of weight $1$ on 3-Lie algebras.
Next we construct a cohomology theory for a crossed homomorphism on $3$-Lie algebras and classify infinitesimal deformations of crossed homomorphisms using the second cohomology group. Finally, using the higher derived brackets, we construct an $L_\infty$-algebra whose Maurer-Cartan elements are crossed homomorphisms. Consequently, we obtain the twisted $L_\infty$-algebra that controls deformations of a given crossed homomorphism on $3$-Lie algebras.
\end{abstract}

\renewcommand{\thefootnote}{}
\footnotetext{2020 Mathematics Subject Classification. 17A42, 17B56, 17B38}
\keywords{$3$-Lie algebra, crossed homomorphism, $L_\infty$-algebra, cohomology, deformation
}

\maketitle

\tableofcontents

\allowdisplaybreaks


\section{Introduction}
The notion of $3$-Lie algebras and more generally, $n$-Lie
algebras (also called Filippov algebras) was introduced in \cite{Filippov}. See the review article \cite{review,Makhlouf} for more details. The $n$-Lie algebra is the algebraic structure corresponding to Nambu mechanics \cite{Nambu}. In recent years, $3$-Lie algebras have been widely studied and applied in the fields of mathematics and physics, especially in string theory and  M2-branes \cite{Bagger,HHM,Gustavsson}. For example, metric $3$-Lie algebras play a significant role in the basic model of Bagger-Lambert-Gustavsson theory \cite{dMFM,Medeiros}, the supersymmetric Yang-Mills theory can be studied by a special structure of $3$-Lie algebras \cite{Gomis}, and in \cite{Basu}, Basu and Harvey suggested to replace the Lie algebra appearing in the Nahm equation by a $3$-Lie algebra for the lifted Nahm equations.

The notion of a crossed homomorphism of on Lie algebras was introduced by Lue in \cite{Lue}. A crossed homomorphism is also called a relative difference operator or differential operator of weight $1$ with respect to the adjoint representation \cite{GUOK,GSZ,Liu-Guo}. Crossed homomorphisms are related to post-Lie algebras and can be used to study the integration of post-Lie algebras  \cite{Mencattini}. In \cite{PSTZ}, using the crossed homomorphisms on Lie algebras, they studied the relationship between the category of weak representations of Lie-Rinehart algebras and the monoidal category of representations of Lie algebras of Cartan type. They also introduced the cohomology theory of crossed homomorphisms on Lie algebras and studied linear deformations of crossed homomorphisms. In \cite{JSH}, the authors studied the controlling algebra of relative difference Lie algebras and defined the cohomology of difference Lie algebras with coefficients in arbitrary representations. Crossed homomorphisms on Hopf algebras and Cartier-Kostant-Milnor-Moore theorem for difference Hopf algebras were studied in \cite{GLT}.

The research on the deformation theory of algebraic structures began with the seminal work of Gerstenhaber  for associative algebras \cite{Ge}. Next, Nijenhuis and Richardson extended the study of deformation theory to Lie algebra
\cite{NR}. In \cite{FO,Makhlouf}, the deformation problem of $n$-Lie algebras and $3$-Lie algebras were studied respectively. See the review \cite{GlST} for more details. Recently, the deformations of certain operators, e.g. morphisms, relative Rota-Baxter operators on $3$-Lie algebras have been  deeply studied \cite{Arfa,THS}. Actually,  an invertible linear map is a differential operator if and only its inverse is a (relative) Rota-Baxter operator on $3$-Lie algebras \cite{BaiRGuo,BGS-3-Bialgebras}.

The purpose of this  paper is to study crossed homomorphisms on 3-Lie algebras, with particular interests in the cohomology and deformation theories. The crossed homomorphisms introduced in this paper are closely related to relative Rota-Baxter operators of weight $1$ on 3-Lie algebras introduced in \cite{HouSZ}. More precisely, the inverse of an invertible crossed homomorphism is a relative Rota-Baxter operators  of weight $1$, which generalizes the classical relations between crossed homomorphisms and relative Rota-Baxter operators of weight 1 on Lie algebras, and thus justifies its correctness. A crossed homomorphism gives rise to a new representation, and the corresponding cohomology of 3-Lie algebras is taken to be the cohomology of the crossed homomorphism. As expected, the second cohomology group classifies infinitesimal deformations of the crossed homomorphism. Furthermore, we use
Voronov's higher derived brackets to construct an $L_\infty$-algebra whose Maurer-Cartan elements are crossed homomorphisms. Consequently, we obtain the $L_\infty$-algebra governing deformations of a crossed homomorphism. Note that in the Lie algebra context, it is a differential graded Lie algebra governing deformations of a crossed homomorphism on Lie algebras. While for 3-Lie algebras, it is indeed an $L_\infty$-algebra with nontrivial $l_3$ governing deformations of a crossed homomorphism, which is totally different from the case of Lie algebras.

 The paper is organized as follows. In Section \ref{sec:two}, we introduce the notion of crossed homomorphisms on $3$-Lie algebras and illustrate the relationship between crossed homomorphisms and relative Rota-Baxter operator of weight $1$.  In Section \ref{sec:cohomology},  we establish the cohomology theory of crossed homomorphisms on $3$-Lie algebras.  We use the cohomology theory of crossed homomorphisms that we established to classify infinitesimal deformations of crossed homomorphisms. In Section \ref{sec:four}, we
construct an $L_{\infty}$-algebra whose Maurer-Cartan elements are precisely crossed homomorphisms on $3$-Lie algebras. We also using Getzler's twisted $L_{\infty}$-algebra theory to  characterize  deformations of crossed homomorphisms on $3$-Lie algebras.

\vspace{2mm}

In this paper, we work over an algebraically closed filed $\mathbb K$ of characteristic $0$.


\vspace{2mm}
\noindent
{\bf Acknowledgements.} This research is supported by NSFC (12001226).  

\section{Crossed homomorphisms on $3$-Lie algebras}\label{sec:two}
In this section, we introduce the notion of crossed homomorphisms on $3$-Lie algebras, and find that there is a close relationship between crossed homomorphisms and relative Rota-Baxter operators of weight $1$ on $3$-Lie algebras.

\begin{defi}{\rm (\cite{Filippov})}\label{defi:3Lie}
A {\bf 3-Lie algebra}
is a vector space $\g$ together with a skew-symmetric linear map $[\cdot,\cdot,\cdot]_{\g}:\wedge^{3}\g\rightarrow \g$, such that for $ x_{i}\in \g, 1\leq i\leq 5$, the following {\bf Fundamental Identity} holds:
\begin{eqnarray}
\nonumber\qquad &&[x_1,x_2,[x_3,x_4, x_5]_{\g}]_{\g}\\
&=&[[x_1,x_2, x_3]_{\g},x_4,x_5]_{\g}+[x_3,[x_1,x_2, x_4]_{\g},x_5]_{\g}+[x_3,x_4,[x_1,x_2, x_5]_{\g}]_{\g}.
 \label{eq:jacobi1}
\end{eqnarray}
\end{defi}
For $x_{1},x_{2}\in \g$, define $\ad_{x_1,x_2}\in \gl(\g)$ by
\begin{eqnarray*}\label{eq2}
\ad_{x_{1},x_{2}}x:=[x_{1},x_{2},x]_{\g},\quad \forall x\in \g.
\end{eqnarray*}
Then $\ad_{x_{1},x_{2}}$ is a derivation, i.e.
$$\ad_{x_{1},x_{2}}[x_{3},x_{4},x_{5}]_{\g}=[\ad_{x_{1},x_{2}}x_{3},x_{4},x_{5}]_{\g}+
[x_{3},\ad_{x_{1},x_{2}}x_{4},x_{5}]_{\g}+[x_{3},x_{4},\ad_{x_{1},x_{2}}x_{5}]_{\g}.$$
\begin{defi}{\rm (\cite{KA})}
A {\bf representation} of a $3$-Lie algebra $(\g,[\cdot,\cdot,\cdot]_{\g})$ on a vector space $V$ is a linear
map: $\rho:\wedge^{2}\g\rightarrow \gl(V)$, such that for all $x_{1}, x_{2}, x_{3}, x_{4}\in \g,$ the following equalities hold:
\begin{eqnarray}
~\label{representation-1}\rho(x_{1},x_{2})\rho(x_{3},x_{4})&=&\rho([x_{1},x_{2},x_{3}]_{\g},x_{4})+
\rho(x_{3},[x_{1},x_{2},x_{4}]_{\g})+\rho(x_{3},x_{4})\rho(x_{1},x_{2}),\\
~\label{representation-2}\rho(x_{1},[x_{2},x_{3},x_{4}]_{\g})&=&\rho(x_{3},x_{4})\rho(x_{1},x_{2})-\rho(x_{2},x_{4})\rho(x_{1},x_{3})
+\rho(x_{2},x_{3})\rho(x_{1},x_{4}).
\end{eqnarray}
\end{defi}

Let $(\g,[\cdot,\cdot,\cdot]_{\g})$ be a $3$-Lie algebra.  The linear map $\ad:\wedge^2\g\rightarrow\gl(\g)$ defines a representation
of the $3$-Lie algebra $\g$ on itself, which is called the {\bf adjoint representation} of $\g.$

\begin{defi}{\rm (\cite{Filippov})}
Let $(\g,[\cdot,\cdot,\cdot]_{\g})$ be a $3$-Lie algebra. Then the subalgebra $[\g,\g,\g]_{\g}$ is called the {\bf derived algebra} of $\g$, and denoted by ${\g}^1$. 
The subspace $$\huaC(\g)=\{x\in \g~|~[x,y,z]_{\g}=0,~ \forall y,z\in\g\}$$   is called the {\bf center} of $\g$.
\end{defi}

\begin{defi}\cite{HouSZ}
Let $(\g,[\cdot,\cdot,\cdot]_{\g})$ and $(\h,[\cdot,\cdot,\cdot]_{\h})$  be two $3$-Lie algebras.
Let $\rho: \wedge^2\g\rightarrow \gl(\h)$ be a representation of the $3$-Lie algebra $\g$ on the vector space $\h$.
If
for all $x,y\in \g, u,v,w\in \h,$
\begin{eqnarray}
\label{eq:action-1}{}\rho(x,y)u\in \huaC(\h),\\
\label{eq:action-2}{}\rho(x,y)[u,v,w]_{\h}=0,
\end{eqnarray}
then $\rho$ is called  an {\bf action} of  $\g$ on  $\h.$ We denote an action by $(\h;\rho).$
\end{defi}

\begin{pro}\cite{HouSZ}\label{lem:semi}
  Let $\rho: \wedge^2\g\rightarrow \gl(\h)$ be an action of a $3$-Lie algebra $(\g,[\cdot,\cdot,\cdot]_{\g})$ on a $3$-Lie algebra $(\h,[\cdot,\cdot,\cdot]_{\h}).$ There is a $3$-Lie algebra structure on $\g\oplus \h$, defined by
   \begin{eqnarray}
{}[x+u,y+v,z+w]_{\rho}=[x,y,z]_{\g}+\rho(x,y)w+\rho(y,z)u+\rho(z,x)v+[u,v,w]_\h,
\end{eqnarray}
  for all $x,y,z\in\g,~u,v,w\in\h.$ This $3$-Lie algebra is called the  semidirect product of the $3$-Lie algebra $\g$ and the $3$-Lie algebra $\h$ with respect to the action $\rho$, and denoted by $\g\ltimes _\rho\h.$
\end{pro}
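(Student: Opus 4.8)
The plan is to verify that the proposed bracket $[\cdot,\cdot,\cdot]_\rho$ on $\g\oplus\h$ is skew-symmetric and satisfies the Fundamental Identity \eqref{eq:jacobi1}. Skew-symmetry is immediate from the skew-symmetry of $[\cdot,\cdot,\cdot]_\g$ and $[\cdot,\cdot,\cdot]_\h$ together with the cyclic way the three terms $\rho(x,y)w+\rho(y,z)u+\rho(z,x)v$ are arranged: swapping any two of the three inputs permutes these summands in a manner compatible with the sign change. So the real content is the Fundamental Identity, which I would check by expanding both sides of \eqref{eq:jacobi1} for elements $X_i=x_i+u_i\in\g\oplus\h$ and matching terms according to where they land in $\g$ versus $\h$.

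First I would split the verification by the number of $\h$-components appearing among $X_1,\dots,X_5$. When all five lie in $\g$, the identity is exactly the Fundamental Identity for $\g$. When all five lie in $\h$, it is the Fundamental Identity for $\h$. The intermediate cases are where the representation axioms \eqref{representation-1}, \eqref{representation-2} and the action conditions \eqref{eq:action-1}, \eqref{eq:action-2} come in. For one or two $\h$-inputs, collecting the $\h$-valued terms produces precisely the combinations of $\rho(\cdot,\cdot)\rho(\cdot,\cdot)$ and $\rho([\cdot,\cdot,\cdot]_\g,\cdot)$ that cancel by \eqref{representation-1} and \eqref{representation-2}, exactly as in the standard proof that $\rho$ being a representation makes the semidirect product a $3$-Lie algebra. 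The new feature, relative to an ordinary representation, is that whenever a term of the form $[\rho(x,y)u,v,w]_\h$ or $\rho(x,y)[u,v,w]_\h$ would appear (which happens once three or more $\h$-inputs are present), it vanishes: the former because $\rho(x,y)u\in\huaC(\h)$ by \eqref{eq:action-1}, the latter directly by \eqref{eq:action-2}. Thus in the cases with three, four $\h$-inputs the only surviving contributions are the purely $\g$- or purely $\h$-brackets, which already balance.

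The main obstacle is purely bookkeeping: organizing the roughly $2^5$ distributions of $\g$/$\h$-labels (reduced by skew-symmetry to a handful of essentially different patterns, indexed by how many of the inputs are in $\h$ and their positions relative to the inner bracket $[X_3,X_4,X_5]$) and confirming that in each pattern the leftover terms cancel by exactly one of the four cited identities. I expect no genuine difficulty beyond this casework; the key structural point to highlight is that the action conditions \eqref{eq:action-1}–\eqref{eq:action-2} are precisely what is needed to kill all terms involving $\rho$ applied to, or $[\cdot,\cdot,\cdot]_\h$ evaluated on, elements already in the image of $\rho$, so that the $\h$-part of the bracket never interferes destructively with the Fundamental Identity. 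In writing this up I would present the skew-symmetry in one line, then dispatch the all-$\g$ and all-$\h$ cases, and finally treat the mixed cases by the number of $\h$-entries, citing \eqref{representation-1}, \eqref{representation-2}, \eqref{eq:action-1}, \eqref{eq:action-2} at the points where each is used.
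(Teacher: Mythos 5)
Your proposal is sound: skew-symmetry is immediate, and the Fundamental Identity is verified by multilinearity case-by-case according to how many of the five entries lie in $\h$, with \eqref{representation-1}--\eqref{representation-2} handling the one-$\h$-entry cases and \eqref{eq:action-1}--\eqref{eq:action-2} killing the offending terms once three $\h$-entries appear. The paper itself gives no proof (the proposition is quoted from \cite{HouSZ}), and your direct verification is exactly the standard argument. One small refinement: every bracket with one $\g$-entry and two $\h$-entries vanishes by the very definition of $[\cdot,\cdot,\cdot]_\rho$, so the two-$\h$-entry cases (and the four-$\h$-entry cases) are trivially zero on both sides and do not actually invoke the representation axioms, which are needed only when exactly one entry lies in $\h$.
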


Next we give the notion of crossed homomorphisms on $3$-Lie algebras.
\begin{defi}\label{crossed-homo}
  Let $\rho:\wedge^2\g\to\gl(\h)$ be an action of a $3$-Lie algebra $(\g,[\cdot,\cdot,\cdot]_\g)$  on a $3$-Lie algebra  $(\h,[\cdot,\cdot,\cdot]_\h)$.  A linear map $H:\g\to \h$ is called a {\bf crossed homomorphism with respect to the action $\rho$} if
\begin{eqnarray}\label{eq:crossed-homo}
\qquad H[x,y,z]_\g=\rho(x,y)(Hz)+\rho(y,z)(Hx)+\rho(z,x)(Hy)+[Hx,Hy,Hz]_\h,\quad \forall x, y, z\in \g.
\end{eqnarray}
\end{defi}

\begin{rmk}
  If the action $\rho$ of $\g$ on $\h$ is zero, then any crossed homomorphism from $\g$ to $\h$ is
nothing but a $3$-Lie algebra homomorphism. If~$\h$ is commutative, then any crossed homomorphism
from $\g$ to $\h$ is simply a derivation from $\g$ to $\h$ with respect to the representation $(\h; \rho)$.
\end{rmk}

In the Lie algebra context, crossed homomorphisms play important roles in the study of representations of Lie algebras of Cartan type. An essential ingredient in the whole theory is that a crossed homomorphism $H:\g\to\h$ induces a homomorphism from the Lie algebra $\g$ to the semidirect product Lie algebra $\g\ltimes \h$ (\cite[Theorem 2.7]{PSTZ}). Now for crossed homomorphisms on 3-Lie algebras, we still have this characterization.

\begin{thm}
 Let $\rho: \wedge^2\g\rightarrow \gl(\h)$ be an action of a $3$-Lie algebra $(\g,[\cdot,\cdot,\cdot]_{\g})$ on a $3$-Lie algebra $(\h,[\cdot,\cdot,\cdot]_{\h}).$ Then a linear map $H:\g\rightarrow\h$ is a crossed homomorphism from $\g$ to $\h$ if and only if the map
 $\phi_{H}:\g\rightarrow\g\ltimes _\rho\h$ defined by
 \begin{eqnarray}
   \phi_{H}(x):=(x,Hx),\quad \forall x\in\g,
 \end{eqnarray}
 is a $3$-Lie algebra homomorphism.
\end{thm}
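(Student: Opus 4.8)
The plan is to prove both directions by a direct unwinding of the definition of a $3$-Lie algebra homomorphism applied to the candidate map $\phi_H(x) = (x, Hx)$, using the explicit bracket formula for the semidirect product from Proposition~\ref{lem:semi}. First I would observe that $\phi_H$ is automatically linear, so the only condition to check is compatibility with the ternary brackets: $\phi_H$ is a $3$-Lie algebra homomorphism if and only if, for all $x,y,z \in \g$,
\[
\phi_H([x,y,z]_\g) = [\phi_H(x), \phi_H(y), \phi_H(z)]_\rho.
\]

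Next I would compute each side. The left-hand side is simply $(\,[x,y,z]_\g,\ H[x,y,z]_\g\,)$ by definition of $\phi_H$. For the right-hand side, I substitute $x+u \rightsquigarrow x + Hx$, etc., into the semidirect product bracket of Proposition~\ref{lem:semi}, which gives
\[
[x+Hx,\, y+Hy,\, z+Hz]_\rho = [x,y,z]_\g + \rho(x,y)(Hz) + \rho(y,z)(Hx) + \rho(z,x)(Hy) + [Hx,Hy,Hz]_\h.
\]
Comparing the two expressions in $\g \oplus \h$ componentwise: the $\g$-components agree identically (both equal $[x,y,z]_\g$), so the homomorphism condition reduces exactly to the equality of the $\h$-components, namely
\[
H[x,y,z]_\g = \rho(x,y)(Hz) + \rho(y,z)(Hx) + \rho(z,x)(Hy) + [Hx,Hy,Hz]_\h,
\]
which is precisely the defining equation \eqref{eq:crossed-homo} of a crossed homomorphism. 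This establishes both implications simultaneously, since each step is an equivalence.

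I do not expect any serious obstacle here: the result is essentially a bookkeeping identity, and the only thing to be careful about is matching the cyclic pattern of the $\rho$-terms in the semidirect product bracket with the cyclic pattern in Definition~\ref{crossed-homo} — one should verify that the assignment $(x+u, y+v, z+w) \mapsto \rho(x,y)w + \rho(y,z)u + \rho(z,x)v$ specializes correctly under $u = Hx$, $v = Hy$, $w = Hz$ to give $\rho(x,y)(Hz) + \rho(y,z)(Hx) + \rho(z,x)(Hy)$, which it does verbatim. One could add a remark that this is the $3$-Lie analogue of \cite[Theorem 2.7]{PSTZ}, and that the absence of any extra constraint (beyond \eqref{eq:crossed-homo}) reflects the fact that the $\g$-components of $\phi_H$ form an honest sub-$3$-Lie-algebra isomorphic to $\g$.
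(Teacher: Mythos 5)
Your proposal is correct and follows essentially the same route as the paper: compute $\phi_H([x,y,z]_\g)$ and $[\phi_H(x),\phi_H(y),\phi_H(z)]_\rho$ via the semidirect product bracket and observe that equality of the $\h$-components is exactly the crossed homomorphism identity \eqref{eq:crossed-homo}, the $\g$-components agreeing automatically. Nothing is missing; the extra remarks about the cyclic pattern of the $\rho$-terms are just careful bookkeeping that the paper leaves implicit.
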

\begin{proof}
For all $x,y,z\in\g,$ we have
 \begin{eqnarray*}
  \phi_{H}[x,y,z]_{\g}&=&([x,y,z]_{\g},H[x,y,z]_{\g});\\
  {}[\phi_{H}(x),\phi_{H}(y),\phi_{H}(z)]_{\rho}
   &=&([x,y,z]_{\g},\rho(x,y)(Hz)+\rho(y,z)(Hx)+\rho(z,x)(Hy)+[Hx,Hy,Hz]_{\h}).
 \end{eqnarray*}
 Thus, $\phi_{H}[x,y,z]_{\g}=[\phi_{H}(x),\phi_{H}(y),\phi_{H}(z)]_{\rho}$ if and only if $H$ is a crossed homomorphism from $\g$ to $\h$ with respect to the action $\rho.$
\end{proof}

\begin{ex}
Let $(\g,[\cdot,\cdot,\cdot]_{\g})$ be a $4$-dimensional $3$-Lie algebra with a basis $\{e_1,e_2,e_3,e_4\}$ and the nonzero multiplication is given by $$[e_2,e_3,e_4]_{\g}=e_1.$$
The center of $\g$ is the subspace generated by $\{e_1\}.$ It is obvious that
  the adjoint representation $\ad:\wedge^2\g\rightarrow\gl(\g)$
is an action of $\g$ on itself.
For a matrix $\left(\begin{array}{cccc}
a_{11}&a_{12}&a_{13}&a_{14}\\
a_{21}&a_{22}&a_{23}&a_{24}\\
a_{31}&a_{32}&a_{33}&a_{34}\\
a_{41}&a_{42}&a_{43}&a_{44}
\end{array}\right),$
define
\begin{align*}
  He_1=&a_{11}e_1+a_{21}e_2+a_{31}e_3+a_{41}e_4,\quad He_2=a_{12}e_1+a_{22}e_2+a_{32}e_3+a_{42}e_4,\\
  He_3=&a_{13}e_1+a_{23}e_2+a_{33}e_3+a_{43}e_4,\quad He_4=a_{14}e_1+a_{24}e_2+a_{34}e_3+a_{44}e_4.
\end{align*}
$H$ is a crossed homomorphism from $\g$ to $\g$ with respect to the action $\ad$ if and only if
\begin{eqnarray*}
H[e_i,e_j,e_k]_{\g}=[He_i,e_j,e_k]_{\g}+[e_i,He_j,e_k]_{\g}+[e_i,e_j,He_k]_{\g}+[He_i,He_j,He_k]_{\g},\quad i,j,k=1,2,3,4.
\end{eqnarray*}
By straightforward computations, we deduce that $H$ is a crossed homomorphism if and only if
\begin{eqnarray*}
\left\{\begin{array}{rcl}
{}a_{11}&=&a_{22}+a_{33}+a_{44}+a_{23}a_{34}a_{42}+a_{24}a_{32}a_{43}+a_{22}a_{33}a_{44}-a_{24}a_{33}a_{42}-a_{22}a_{34}a_{43}-a_{23}a_{32}a_{44},\\
{}a_{21}&=&a_{31}=a_{41}=0.
\end{array}\right.
\end{eqnarray*}
In particular,
$\begin{cases}
H(e_1)=0,\\
H(e_2)=e_2,\\
H(e_3)=e_3,\\
H(e_4)=-e_4,
\end{cases}$is a crossed homomorphism from $\g$ to $\g$ with respect to the adjoint action $\ad$.
\end{ex}

\begin{defi}
Let $H$ and $H'$ be two crossed homomorphisms from a $3$-Lie algebra $(\g,[\cdot,\cdot,\cdot]_\g)$ to a $3$-Lie algebra $(\h,[\cdot,\cdot,\cdot]_\h)$ with respect to an action $\rho$. A {\bf homomorphism} from $H$ to $H'$ consists of $3$-Lie algebra homomorphisms $\psi_\g: \g\lon\g$ and   $\psi_\h: \h\lon\h$ such that
\begin{eqnarray}
 \label{condition-1}\psi_\h\circ H&=&H'\circ\psi_\g,\\
  \label{condition-2}\psi_\h(\rho(x,y)u)&=&\rho(\psi_\g(x),\psi_\g(y))(\psi_\h(u)),\quad \forall x,y\in \g, u\in \h.
\end{eqnarray}
\emptycomment{
i.e. we have the following commutative diagram
\[\xymatrix{
 \mathfrak g \ar[d]_{H} \ar[r]^{\psi_{\g}}
                & \mathfrak g \ar[d]^{H'}  \\
  \mathfrak h \ar[r]^{\psi_{\h}}
                & \mathfrak h             }\]}
In particular, if both $\psi_\g$ and $\psi_\h$ are invertible, $(\psi_\g, \psi_\h)$ is called an {\bf isomorphism} from $H$ to $H'$.
\end{defi}
\begin{lem}
Let $H:\g\rightarrow\h$ be a crossed homomorphism from $\g$ to $\h$ with respect to an action $\rho$.
Let $\psi_\g: \g\lon\g$ and  $\psi_\h: \h\lon\h$ be $3$-Lie algebra isomorphisms such that \eqref{condition-2} holds.
Then $\psi^{-1}_{\h}\circ H\circ\psi_{\g}$ is a crossed homomorphism from $\g$ to $\h$ with respect to the action $\rho$.
\end{lem}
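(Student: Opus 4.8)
The plan is to check the defining identity \eqref{eq:crossed-homo} directly for the map $H':=\psi_\h^{-1}\circ H\circ\psi_\g$, feeding the crossed homomorphism axiom for $H$ with the arguments $\psi_\g(x),\psi_\g(y),\psi_\g(z)$ and then transporting everything back through $\psi_\h^{-1}$.

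First I would record the form of the equivariance hypothesis that is actually needed. Since $\psi_\h$ is invertible, substituting $\psi_\h^{-1}(u)$ for $u$ in \eqref{condition-2} and then applying $\psi_\h^{-1}$ yields
\[
\psi_\h^{-1}\big(\rho(\psi_\g(x),\psi_\g(y))u\big)=\rho(x,y)\big(\psi_\h^{-1}(u)\big),\qquad \forall\, x,y\in\g,\ u\in\h .
\]
Moreover, the inverse of a $3$-Lie algebra isomorphism is again an isomorphism, so $\psi_\h^{-1}$ is multiplicative for $[\cdot,\cdot,\cdot]_\h$.

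Next, for $x,y,z\in\g$ I would compute, using that $\psi_\g$ is a homomorphism,
\[
H'[x,y,z]_\g=\psi_\h^{-1}\big(H[\psi_\g(x),\psi_\g(y),\psi_\g(z)]_\g\big).
\]
Applying \eqref{eq:crossed-homo} for $H$ to the triple $(\psi_\g(x),\psi_\g(y),\psi_\g(z))$, then applying $\psi_\h^{-1}$ and invoking the two facts above (the conjugation rule for $\rho$ on each of the three $\rho$-terms, and multiplicativity of $\psi_\h^{-1}$ on the bracket term), the right-hand side becomes precisely
\[
\rho(x,y)(H'z)+\rho(y,z)(H'x)+\rho(z,x)(H'y)+[H'x,H'y,H'z]_\h ,
\]
which is the identity \eqref{eq:crossed-homo} for $H'$, as desired.

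There is essentially no serious obstacle; the only step demanding a little care is extracting the conjugation rule $\psi_\h^{-1}\rho(\psi_\g(x),\psi_\g(y))=\rho(x,y)\psi_\h^{-1}$ from \eqref{condition-2}, since one inverts $\psi_\h$ both on the argument slot and on the output of $\rho$. The rest is routine bookkeeping, and no further properties of the action (such as \eqref{eq:action-1}--\eqref{eq:action-2}) are needed beyond those already built into the definitions.
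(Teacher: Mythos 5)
Your proposal is correct and follows essentially the same route as the paper's proof: apply the crossed homomorphism identity for $H$ to the triple $(\psi_\g(x),\psi_\g(y),\psi_\g(z))$, then transport back through $\psi_\h^{-1}$ using the conjugation rule extracted from \eqref{condition-2} together with the fact that $\psi_\h^{-1}$ is a $3$-Lie algebra homomorphism. The only difference is cosmetic: you state the conjugation rule $\psi_\h^{-1}\rho(\psi_\g(x),\psi_\g(y))=\rho(x,y)\psi_\h^{-1}$ explicitly, while the paper uses it implicitly in its chain of equalities.
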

\begin{proof}
For all $x,y,z\in\g$, we have
\begin{eqnarray*}
  &&(\psi^{-1}_{\h}\circ H\circ\psi_{\g})[x,y,z]_{\g}\\
  &=&\psi^{-1}_{\h}\Big(\rho(\psi_{\g}(x),\psi_{\g}(y))H(\psi_{\g}(z))+\rho(\psi_{\g}(y),\psi_{\g}(z))H(\psi_{\g}(x))
  +\rho(\psi_{\g}(z),\psi_{\g}(x))H(\psi_{\g}(y))\\
  &&+[H(\psi_{\g}(x)),H(\psi_{\g}(y)),H(\psi_{\g}(z))]_{\h}\Big)\\
  &=&\rho(x,y)(\psi^{-1}_{\h}\circ H\circ\psi_{\g}(z))+\rho(y,z)(\psi^{-1}_{\h}\circ H\circ\psi_{\g}(x))+\rho(z,x)(\psi^{-1}_{\h}\circ H\circ\psi_{\g}(y))\\
  &&+[\psi^{-1}_{\h}\circ H\circ\psi_{\g}(x),\psi^{-1}_{\h}\circ H\circ\psi_{\g}(y),\psi^{-1}_{\h}\circ H\circ\psi_{\g}(z)]_{\h},
\end{eqnarray*}
which implies that $\psi^{-1}_{\h}\circ H\circ\psi_{\g}$ is a crossed homomorphism.
\end{proof}

At the end of this section, we establish the relationship between crossed homomorphisms and relative Rota-Baxter operators of weight $1$ on $3$-Lie algebras.

Recall from \cite{HouSZ} that
a linear map $T: \h\rightarrow\g$ is called a {\bf
  relative Rota-Baxter operator}  of weight $\lambda\in \mathbb K$ from a $3$-Lie algebra $\h$ to a $3$-Lie algebra $\g$ with respect to an action $\rho$  if
\begin{eqnarray} \label{eq:rRB}
  [Tu,Tv,Tw]_\g
=T\Big(\rho(Tu,Tv)w+\rho(Tv,Tw)u+\rho(Tw,Tu)v+\lambda[u,v,w]_\h\Big),
\end{eqnarray}
for all $u, v, w\in\h.$

\begin{pro}
  Let $\rho:\wedge^2\g\to\gl(\h)$ be an action of a $3$-Lie algebra $(\g,[\cdot,\cdot,\cdot]_\g)$  on a $3$-Lie algebra  $(\h,[\cdot,\cdot,\cdot]_\h)$.  An invertible linear map $H:\g\rightarrow\h$ is a crossed homomorphism from the $3$-Lie algebra $\g$ to the $3$-Lie algebra $\h$ with respect to the action $\rho$ if and only if $H^{-1}$ is a relative Rota-Baxter operator of weight $1$ from the $3$-Lie
algebra $\h$ to the $3$-Lie algebra $\g$ with respect to the action $\rho$.
\end{pro}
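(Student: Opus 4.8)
The plan is to verify the equivalence by a direct substitution argument, transporting the defining identity \eqref{eq:crossed-homo} along the bijection $H$. Suppose first that $H:\g\to\h$ is an invertible crossed homomorphism. I would set $T:=H^{-1}:\h\to\g$ and, given $u,v,w\in\h$, write $x:=Tu$, $y:=Tv$, $z:=Tw$, so that $u=Hx$, $v=Hy$, $w=Hz$. Applying $T$ to both sides of \eqref{eq:crossed-homo} and using $T\circ H=\Id_\g$ on the left yields
\[
[x,y,z]_\g=T\big(\rho(x,y)(Hz)+\rho(y,z)(Hx)+\rho(z,x)(Hy)+[Hx,Hy,Hz]_\h\big),
\]
which is exactly \eqref{eq:rRB} with $\lambda=1$ after rewriting in terms of $u,v,w$ and $T$. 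Conversely, if $T:\h\to\g$ is an invertible relative Rota-Baxter operator of weight $1$, I would set $H:=T^{-1}$ and, for $x,y,z\in\g$, put $u:=Hx$, $v:=Hy$, $w:=Hz$; applying $H$ to \eqref{eq:rRB} and using $H\circ T=\Id_\h$ recovers \eqref{eq:crossed-homo}. Thus the two conditions are literally the same equation read through the isomorphism, and the proposition follows.

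The only point requiring a little care is the bookkeeping of which arguments of $\rho$ carry the operator: in \eqref{eq:crossed-homo} the action is applied directly to elements of $\g$ and then $H$ is applied to a third argument, whereas in \eqref{eq:rRB} the action $\rho(Tu,Tv)$ already has $T$ built into both slots. The substitution $x=Tu$, $y=Tv$ makes these match term by term, so no genuine obstacle arises; one simply has to keep the cyclic pattern $\rho(x,y)Hz+\rho(y,z)Hx+\rho(z,x)Hy$ aligned with $\rho(Tu,Tv)w+\rho(Tv,Tw)u+\rho(Tw,Tu)v$. I expect the proof to be short, essentially a two-line computation in each direction, with the main content being the clean choice of variables that exhibits the symmetry between the two notions. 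This is the $3$-Lie analogue of the classical fact for Lie algebras recalled in the introduction, and the argument is structurally identical.
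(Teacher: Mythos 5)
Your proposal is correct and follows essentially the same route as the paper: in both directions one substitutes $x=H^{-1}u$, etc., and applies $H^{-1}$ (resp.\ $H$) to the defining identity, using invertibility to convert \eqref{eq:crossed-homo} into \eqref{eq:rRB} with $\lambda=1$ and back. No gaps; this matches the paper's proof term by term.
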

\begin{proof}
 If an invertible linear map $H:\g\rightarrow\h$ is a crossed homomorphism, then for $u_1, u_2, u_3\in
 \h,$ by \eqref{eq:crossed-homo}, we have
 \begin{align*}
   &[H^{-1}(u_1),H^{-1}(u_2),H^{-1}(u_3)]_{\g}\\
   =&H^{-1}(H[H^{-1}(u_1),H^{-1}(u_2),H^{-1}(u_3)]_{\g})\\
   =&H^{-1}\Big(\rho(H^{-1}(u_1),H^{-1}(u_2))(u_3)+\rho(H^{-1}(u_2),H^{-1}(u_3))(u_1)+\rho(H^{-1}(u_3),H^{-1}(u_1))(u_2)+[u_1,u_2,u_3]_{\h}\Big).
 \end{align*}
Therefore, $H^{-1}$ is a relative Rota-Baxter operator of weight $1$.

Conversely, if $H^{-1}$ be a relative Rota-Baxter operator of weight $1$. For all $x_1, x_2, x_3\in\g,$ assume $x_i=H^{-1}(u_i),1\leq i\leq3,$ for $u_i\in \h.$ By \eqref{eq:rRB}, we have
\begin{align*}
 &H[x_1,x_2,x_3]_{\g}\\
 =&H[H^{-1}(u_1),H^{-1}(u_2),H^{-1}(u_3)]_{\g}\\
 =&H(H^{-1}(\rho(H^{-1}(u_1),H^{-1}(u_2))(u_3)+\rho(H^{-1}(u_2),H^{-1}(u_3))(u_1)+\rho(H^{-1}(u_3),H^{-1}(u_1))(u_2)+[u_1,u_2,u_3]_{\h}))\\
 =&\rho(x_1,x_2)H(x_3)+\rho(x_2,x_3)H(x_1)+\rho(x_3,x_1)H(x_2)+[Hx_1,Hx_2,Hx_3]_{\h}.
\end{align*}
So $H$ is a crossed homomorphism.
\end{proof}

\section{Cohomologies of crossed homomorphisms on $3$-Lie algebras}\label{sec:cohomology}
In this section, we define the cohomology of crossed homomorphisms on $3$-Lie algebras, and we use the second cohomology group to study infinitesimal deformations of crossed homomorphisms.
\subsection{Cohomologies of crossed homomorphisms} First, we recall the cohomologies theory of $3$-Lie algebras.

Let $(V;\rho)$ be a representation of a $3$-Lie algebra  $(\g,[\cdot,\cdot,\cdot]_{\g})$. Denote by
$$\mathfrak C_{\Li}^{n}(\g;V):=\Hom (\underbrace{\wedge^{2} \g\otimes \cdots\otimes \wedge^{2}\g}_{(n-1)}\wedge \g,V),\quad(n\geq 1),$$
which is the space of $n$-cochains.
The coboundary operator ${\rm d}:\mathfrak C_{\Li}^{n}(\g;V)\rightarrow \mathfrak C_{\Li}^{n+1}(\g;V)$ is defined by
\begin{eqnarray*}&&
({\rm d}f)(\mathfrak{X}_1,\cdots,\mathfrak{X}_n,x_{n+1})\\
&=&\sum_{1\leq j<k\leq n}(-1)^{j} f(\mathfrak{X}_1,\cdots,\hat{\mathfrak{X}_{j}},\cdots,\mathfrak{X}_{k-1},
[x_j,y_j,x_k]_{\g}\wedge y_k\\&&+x_k\wedge[x_j,y_j,y_k]_{\g},
\mathfrak{X}_{k+1},\cdots,\mathfrak{X}_{n},x_{n+1})\\&&
+\sum_{j=1}^{n}(-1)^{j}f(\mathfrak{X}_1,\cdots,\hat{\mathfrak{X}_{j}},\cdots,\mathfrak{X}_{n},
[x_j,y_j,x_{n+1}]_{\g})\\&&
+\sum_{j=1}^{n}(-1)^{j+1}\rho(x_j,y_j)f(\mathfrak{X}_1,\cdots,\hat{\mathfrak{X}_{j}},
\cdots,\mathfrak{X}_{n},x_{n+1})\\&&
+(-1)^{n+1}\Big(\rho(y_n,x_{n+1})f(\mathfrak{X}_1,\cdots,\mathfrak{X}_{n-1},x_n)+\rho(x_{n+1},x_n)f(\mathfrak{X}_1,\cdots,\mathfrak{X}_{n-1},y_n)\Big),
\end{eqnarray*}
for all$~\mathfrak{X}_{i}=x_{i}\wedge y_{i}\in \wedge^{2}\g,~i=1,2,\cdots,n~and~x_{n+1}\in \g.$ It was proved in \cite{Casas,Takhtajan1} that ${\rm d}\circ{\rm d}=0.$ Thus, $(\oplus_{n=1}^{+\infty}\mathfrak C_{\Li}^{n}(\g;V),{\rm d})$ is a cochain complex.

\begin{defi}
The {\bf cohomology} of the $3$-Lie algebra $\g$ with coefficients in $V$ is the cohomology of the cochain complex $(\oplus_{n=1}^{+\infty} \mathfrak C_{\Li}^{n}(\g;V),{\rm d})$. Denote by $\huaZ_{\Li}^{n}(\g;V)$ and $\huaB_{\Li}^{n}(\g;V)$
the set of $n$-cocycles and the set of $n$-coboundaries, respectively. The $n$-th cohomology group is defined by
\begin{eqnarray*}
\huaH_{\Li}^{n}(\g;V)=\huaZ_{\Li}^{n}(\g;V)/\huaB_{\Li}^{n}(\g;V).
\end{eqnarray*}
\end{defi}

\begin{lem}
  Let $H$ be a crossed homomorphism from a $3$-Lie algebra $(\g,[\cdot,\cdot,\cdot]_{\g})$ to a $3$-Lie algebra $(\h,[\cdot,\cdot,\cdot]_{\h})$ with respect to an action $\rho.$ Define $\rho_{H}:\wedge^2\g\rightarrow\gl(\h)$ by
  \begin{eqnarray}\label{crossed-representation}
    \rho_{H}(x,y)u:=\rho(x,y)u+[Hx,Hy,u]_{\h}, \quad \forall x,y\in\g,u\in \h.
      \end{eqnarray}
  Then $\rho_{H}$ is a representation of $\g$ on $\h$.
\end{lem}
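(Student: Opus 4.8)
The plan is to verify directly that the map $\rho_H$ defined by $\rho_H(x,y)u = \rho(x,y)u + [Hx,Hy,u]_\h$ satisfies the two defining axioms \eqref{representation-1} and \eqref{representation-2} of a representation of a $3$-Lie algebra. Since $\rho$ is already a representation and $\ad^\h$ (the adjoint action of $\h$) is also a representation, the terms that are "purely $\rho$" or "purely bracket in $\h$" will each close up on their own; the real content is in controlling the mixed terms, and this is where the crossed homomorphism condition \eqref{eq:crossed-homo} together with the action axioms \eqref{eq:action-1}--\eqref{eq:action-2} must be used.

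First I would check \eqref{representation-1}, i.e.\ that
$\rho_H(x_1,x_2)\rho_H(x_3,x_4) = \rho_H([x_1,x_2,x_3]_\g,x_4) + \rho_H(x_3,[x_1,x_2,x_4]_\g) + \rho_H(x_3,x_4)\rho_H(x_1,x_2)$.
Expanding the left-hand side gives four groups of terms: $\rho(x_1,x_2)\rho(x_3,x_4)u$; $\rho(x_1,x_2)[Hx_3,Hx_4,u]_\h$; $[Hx_1,Hx_2,\rho(x_3,x_4)u]_\h$; and $[Hx_1,Hx_2,[Hx_3,Hx_4,u]_\h]_\h$. The second term vanishes by \eqref{eq:action-2} since $[Hx_3,Hx_4,u]_\h \in \h$ is a bracket, and the third term vanishes by \eqref{eq:action-1} since $\rho(x_3,x_4)u \in \huaC(\h)$. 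On the right-hand side, after expanding one gets $\rho([x_1,x_2,x_3]_\g,x_4)u + [H[x_1,x_2,x_3]_\g, Hx_4, u]_\h$ plus the analogous $x_4$-term, plus $\rho(x_3,x_4)\rho(x_1,x_2)u + [Hx_3,Hx_4,\rho(x_1,x_2)u]_\h + \rho(x_3,x_4)[Hx_1,Hx_2,u]_\h + [Hx_3,Hx_4,[Hx_1,Hx_2,u]_\h]_\h$. Again the term $[Hx_3,Hx_4,\rho(x_1,x_2)u]_\h$ vanishes by \eqref{eq:action-1} and $\rho(x_3,x_4)[Hx_1,Hx_2,u]_\h$ vanishes by \eqref{eq:action-2}. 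So after these cancellations, matching the two sides reduces to: (i) the identity \eqref{representation-1} for $\rho$ itself, which handles the pure-$\rho$ terms; and (ii) an identity involving only $\h$-brackets and the images $Hx_i$, namely
$[Hx_1,Hx_2,[Hx_3,Hx_4,u]_\h]_\h = [H[x_1,x_2,x_3]_\g,Hx_4,u]_\h + [Hx_3,H[x_1,x_2,x_4]_\g,u]_\h + [Hx_3,Hx_4,[Hx_1,Hx_2,u]_\h]_\h$.
Here is where I substitute the crossed homomorphism relation \eqref{eq:crossed-homo} for $H[x_1,x_2,x_3]_\g$ and $H[x_1,x_2,x_4]_\g$: the $\rho$-summands that appear land in $\huaC(\h)$ by \eqref{eq:action-1} (being of the form $\rho(\cdot,\cdot)(H\cdot)$), hence contribute zero when placed into an $\h$-bracket; what remains is exactly the Fundamental Identity \eqref{eq:jacobi1} for $\h$ applied to $Hx_1,Hx_2,Hx_3,Hx_4,u$.

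Then I would check \eqref{representation-2} in the same spirit: expand $\rho_H(x_1,[x_2,x_3,x_4]_\g)u$ and the three right-hand terms $\rho_H(x_3,x_4)\rho_H(x_1,x_2)u$, $-\rho_H(x_2,x_4)\rho_H(x_1,x_3)u$, $\rho_H(x_2,x_3)\rho_H(x_1,x_4)u$; the cross-terms of the products again collapse by \eqref{eq:action-1}--\eqref{eq:action-2}, leaving the pure-$\rho$ part (which is \eqref{representation-2} for $\rho$) plus a bracket identity $[Hx_1, H[x_2,x_3,x_4]_\g, u]_\h$ versus $[Hx_3,Hx_4,[Hx_1,Hx_2,u]_\h]_\h - [Hx_2,Hx_4,[Hx_1,Hx_3,u]_\h]_\h + [Hx_2,Hx_3,[Hx_1,Hx_4,u]_\h]_\h$; substituting \eqref{eq:crossed-homo} for $H[x_2,x_3,x_4]_\g$ (the $\rho$-pieces dying by \eqref{eq:action-1}) reduces this to a consequence of the Fundamental Identity for $\h$ (it is the form of \eqref{eq:jacobi1} obtained by moving the inner bracket, using skew-symmetry).

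The main obstacle is purely bookkeeping: keeping track of which mixed terms vanish by which action axiom, and correctly recognizing, after substituting \eqref{eq:crossed-homo}, that the leftover $\h$-bracket identities are precisely instances of the Fundamental Identity \eqref{eq:jacobi1} (possibly after a skew-symmetry rearrangement). No genuinely new idea is needed beyond \eqref{eq:crossed-homo}, \eqref{eq:action-1}, \eqref{eq:action-2}, \eqref{eq:jacobi1}, and the fact that $\rho$ is already a representation; the risk is sign errors and miscounting terms, so I would organize the computation by separating "pure $\rho$", "pure $\h$-bracket", and "mixed" contributions from the start.
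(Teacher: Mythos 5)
Your proposal is correct and follows essentially the same route as the paper: a direct verification of both representation axioms in which the mixed terms are killed by the action conditions \eqref{eq:action-1}--\eqref{eq:action-2}, the crossed homomorphism identity \eqref{eq:crossed-homo} is substituted (its $\rho$-summands dying in $\h$-brackets since they land in the center), and the leftover pure-bracket identities are instances of the Fundamental Identity \eqref{eq:jacobi1}. The paper merely presents this as one unseparated cancellation computation, so your bookkeeping by pure-$\rho$, pure-bracket, and mixed terms is just a cleaner organization of the same argument.
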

\begin{proof}
By a direct calculation using \eqref{eq:jacobi1}-\eqref{eq:crossed-homo},
for all $x_i\in \g,1\leq i\leq 4, u\in \h,$ we have
\begin{eqnarray*}
&&\Big(\rho_{H}(x_1,x_2)\rho_{H}(x_3,x_4)-\rho_{H}(x_3,x_4)\rho_{H}(x_1,x_2)\\
&&\quad -\rho_{H}([x_1,x_2,x_3]_{\g},x_4)+\rho_{H}([x_1,x_2,x_4]_{\g},x_3)\Big)(u)\\
&=&\rho(x_1,x_2)\rho(x_3,x_4)u+[Hx_1,Hx_2,\rho(x_3,x_4)u]_{\h}+\rho(x_1,x_2)[Hx_3,Hx_4,u]_{\h}\\
&&+[Hx_1,Hx_2,[Hx_3,Hx_4,u]_{\h}]_{\h}-\rho(x_3,x_4)\rho(x_1,x_2)u-[Hx_3,Hx_4,\rho(x_1,x_2)u]_{\h}\\
&&-\rho(x_3,x_4)[Hx_1,Hx_2,u]_{\h}-[Hx_3,Hx_4,[Hx_1,Hx_2,u]_{\h}]_{\h}-\rho([x_1,x_2,x_3]_{\g},x_4)u\\
&&-[H[x_1,x_2,x_3]_{\g},Hx_4,u]_{\h}+\rho([x_1,x_2,x_4]_{\g},x_3)u+[H[x_1,x_2,x_4]_{\g},Hx_3,u]_{\h}\\
&=&0,
\end{eqnarray*}
and
\begin{eqnarray*}
&&\Big(\rho_{H}([x_1,x_2,x_3]_{\g},x_4)-\rho_{H}(x_1,x_2)\rho_{H}(x_3,x_4)\\
&&\quad -\rho_{H}(x_2,x_3)\rho_{H}(x_1,x_4)-\rho_{H}(x_3,x_1)\rho_{H}(x_2,x_4)\Big)(u)\\
&=&\rho([x_1,x_2,x_3]_{\g},x_4)u+[H[x_1,x_2,x_3]_{\h},Hx_4,u]_{\h}-\rho(x_1,x_2)\rho(x_3,x_4)u\\
&&-[Hx_1,Hx_2,\rho(x_3,x_4)u]_{\h}-\rho(x_1,x_2)[Hx_3,Hx_4,u]_{\h}-[Hx_1,Hx_2,[Hx_3,Hx_4,u]_{\h}]_{\h}\\
&&-\rho(x_2,x_3)\rho(x_1,x_4)u-[Hx_2,Hx_3,\rho(x_1,x_4)u]_{\h}-\rho(x_2,x_3)[Hx_1,Hx_4,u]_{\h}\\
&&-[Hx_2,Hx_3,[Hx_1,Hx_4,u]_{\h}]_{\h}-\rho(x_3,x_1)\rho(x_2,x_4)u-[Hx_3,Hx_1,\rho(x_2,x_4)u]_{\h}\\
&&-\rho(x_3,x_1)[Hx_2,Hx_4,u]_{\h}-[Hx_3,Hx_1,[Hx_2,Hx_4,u]_{\h}]_{\h}\\
&=&0.
\end{eqnarray*}
Therefore, we deduce that $(\h;\rho_{H})$ is a representation of the $3$-Lie algebra $(\g,[\cdot,\cdot,\cdot]_{\g})$.
\end{proof}

Let ${\rm d}_{\rho_{H}}:\mathfrak C_{\Li}^{n}(\g;\h)\rightarrow \mathfrak C_{\Li}^{n+1}(\g;\h),(n\geq1)$ be the corresponding coboundary
operator of the $3$-Lie algebra $(\g,[\cdot,\cdot,\cdot]_{\g})$ with coefficients in the representation $(\h;\rho_{H})$.
More precisely, for all $f\in \Hom (\underbrace{\wedge^{2} \g\otimes \cdots\otimes \wedge^{2}\g}_{(n-1)}\wedge \g,\h)$, $\mathfrak{X}_i=x_i\wedge y_i\in \wedge^2\g,~ i=1,2,\cdots,n$ and $x_{n+1}\in \g,$ we have
\begin{eqnarray*}
&&({\rm d}_{\rho_{H}}f)(\mathfrak{X}_1,\cdots,\mathfrak{X}_n,x_{n+1})\\
&=&\sum_{1\leq i<k\leq n}(-1)^{i} f(\mathfrak{X}_1,\cdots,\hat{\mathfrak{X}_{i}},\cdots,\mathfrak{X}_{k-1},
[x_i,y_i,x_k]_{\g}\wedge y_k\\&&+x_k\wedge[x_i,y_i,y_k]_{\g},
\mathfrak{X}_{k+1},\cdots,\mathfrak{X}_{n},x_{n+1})\\&&
+\sum_{i=1}^{n}(-1)^{i}f(\mathfrak{X}_1,\cdots,\hat{\mathfrak{X}_{i}},\cdots,\mathfrak{X}_{n},
[x_i,y_i,x_{n+1}]_{\g})\\&&
+\sum_{i=1}^{n}(-1)^{i+1}\rho(x_i,y_i)f(\mathfrak{X}_1,\cdots,\hat{\mathfrak{X}_{i}},
\cdots,\mathfrak{X}_{n},x_{n+1})\\&&
+\sum_{i=1}^{n}(-1)^{i+1}[Hx_i,Hy_i,f(\mathfrak{X}_1,\cdots,\hat{\mathfrak{X}_{i}},
\cdots,\mathfrak{X}_{n},x_{n+1})]_{\h}\\&&
+(-1)^{n+1}\Big(\rho(y_n,x_{n+1})f(\mathfrak{X}_1,\cdots,\mathfrak{X}_{n-1},x_n)+[Hy_n,Hx_{n+1},f(\mathfrak{X}_1,\cdots,\mathfrak{X}_{n-1},x_n)]_{\h} \Big)\\&&
+(-1)^{n+1}\Big(\rho(x_{n+1},x_n)f(\mathfrak{X}_1,\cdots,\mathfrak{X}_{n-1},y_n)+[Hx_{n+1},Hx_n,f(\mathfrak{X}_1,\cdots,\mathfrak{X}_{n-1},y_n)]_{\h}\Big).
\end{eqnarray*}

It is obvious that $f\in \Hom(\g;\h)$ is closed if and only if
\begin{eqnarray*}
f([x_1,x_2,x_3]_{\g})&=&\rho(x_1,x_2)f(x_3)+[Hx_1,Hx_2,f(x_3)]_{\h}+\rho(x_2,x_3)f(x_1)\\
&&+[Hx_2,Hx_3,f(x_1)]_{\h}+\rho(x_3,x_1)f(x_2)+[Hx_3,Hx_1,f(x_2)]_{\h},\quad \forall x_1,x_2,x_3\in \g.
\end{eqnarray*}

Define $\delta:\wedge^2\g\rightarrow\Hom(\g,\h)$ by
\begin{eqnarray*}
\delta(\mathfrak{X})z=\rho(y,z)H(x)+\rho(z,x)H(y)+[Hx,Hy,Hz]_{\h}, \quad \forall\mathfrak{X}=x\wedge y\in\wedge^2\g, z\in \g.
\end{eqnarray*}
\begin{pro}
 Let $H$ be a crossed homomorphism from a $3$-Lie algebra $(\g,[\cdot,\cdot,\cdot]_{\g})$ to a $3$-Lie algebra $(\h,[\cdot,\cdot,\cdot]_{\h})$ with respect to an action $\rho.$  Then $\delta(\mathfrak{X})$ is a $1$-cocycle of the $3$-Lie algebra $(\g,[\cdot,\cdot,\cdot]_{\g})$ with coefficients in $(\h;\rho_{H}).$
\end{pro}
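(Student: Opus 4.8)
The plan is to verify directly that $\delta(\mathfrak{X})$ is $\rho_H$-closed, using the closedness criterion for $1$-cochains recalled just before the definition of $\delta$: writing $\mathfrak{X}=x\wedge y$, one must check that for all $x_1,x_2,x_3\in\g$,
\[
\delta(\mathfrak{X})([x_1,x_2,x_3]_{\g})=\rho_H(x_1,x_2)\delta(\mathfrak{X})(x_3)+\rho_H(x_2,x_3)\delta(\mathfrak{X})(x_1)+\rho_H(x_3,x_1)\delta(\mathfrak{X})(x_2).
\]
The key idea is to split $\delta(\mathfrak{X})$ into two pieces controlled by different axioms. Put $L_{\mathfrak{X}}\colon\g\to\h$, $L_{\mathfrak{X}}(z):=\rho(y,z)(Hx)+\rho(z,x)(Hy)$, so that $\delta(\mathfrak{X})(z)=L_{\mathfrak{X}}(z)+[Hx,Hy,Hz]_{\h}$. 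Observe that $L_{\mathfrak{X}}(z)\in\huaC(\h)$ by \eqref{eq:action-1}; this, together with the skew-symmetry of $\rho$ and of $[\cdot,\cdot,\cdot]_{\h}$, also makes it transparent that $\delta$ is well defined on $\wedge^2\g$.

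The first step is to simplify both sides of the cocycle identity using the action axioms. On the left, $\delta(\mathfrak{X})([x_1,x_2,x_3]_{\g})=L_{\mathfrak{X}}([x_1,x_2,x_3]_{\g})+[Hx,Hy,H[x_1,x_2,x_3]_{\g}]_{\h}$; expanding $H[x_1,x_2,x_3]_{\g}$ by \eqref{eq:crossed-homo} and discarding the terms $[Hx,Hy,\rho(x_i,x_j)(Hx_k)]_{\h}$, which vanish because $\rho(x_i,x_j)(Hx_k)\in\huaC(\h)$, gives $[Hx,Hy,H[x_1,x_2,x_3]_{\g}]_{\h}=[Hx,Hy,[Hx_1,Hx_2,Hx_3]_{\h}]_{\h}$. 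On the right, writing $\rho_H(x_i,x_j)\delta(\mathfrak{X})(x_k)=\rho(x_i,x_j)\delta(\mathfrak{X})(x_k)+[Hx_i,Hx_j,\delta(\mathfrak{X})(x_k)]_{\h}$ via \eqref{crossed-representation} and using that $\rho(x_i,x_j)[Hx,Hy,Hx_k]_{\h}=0$ by \eqref{eq:action-2} and $[Hx_i,Hx_j,L_{\mathfrak{X}}(x_k)]_{\h}=0$ by \eqref{eq:action-1}, one obtains $\rho_H(x_i,x_j)\delta(\mathfrak{X})(x_k)=\rho(x_i,x_j)L_{\mathfrak{X}}(x_k)+[Hx_i,Hx_j,[Hx,Hy,Hx_k]_{\h}]_{\h}$. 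Thus the cocycle identity collapses to the sum of two independent identities:
\[
\text{(a)}\qquad L_{\mathfrak{X}}([x_1,x_2,x_3]_{\g})=\rho(x_1,x_2)L_{\mathfrak{X}}(x_3)+\rho(x_2,x_3)L_{\mathfrak{X}}(x_1)+\rho(x_3,x_1)L_{\mathfrak{X}}(x_2),
\]
\[
\text{(b)}\qquad [Hx,Hy,[Hx_1,Hx_2,Hx_3]_{\h}]_{\h}=[Hx_1,Hx_2,[Hx,Hy,Hx_3]_{\h}]_{\h}+[Hx_2,Hx_3,[Hx,Hy,Hx_1]_{\h}]_{\h}+[Hx_3,Hx_1,[Hx,Hy,Hx_2]_{\h}]_{\h}.
\]

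The second step is to prove (a) and (b) separately. Identity (b) is precisely the Fundamental Identity \eqref{eq:jacobi1} for $\h$ applied to $[Hx,Hy,[Hx_1,Hx_2,Hx_3]_{\h}]_{\h}$, after rewriting the first two resulting terms by the skew-symmetry of the $3$-bracket. Identity (a) asserts that $L_{\mathfrak{X}}$ is a $1$-cocycle of $\g$ with coefficients in the given representation $(\h;\rho)$; it follows from the representation axiom \eqref{representation-2}, by expanding $L_{\mathfrak{X}}([x_1,x_2,x_3]_{\g})=\rho(y,[x_1,x_2,x_3]_{\g})(Hx)+\rho([x_1,x_2,x_3]_{\g},x)(Hy)$ via \eqref{representation-2} and collecting, with the help of the skew-symmetry of $\rho$, the coefficients of $Hx$ and of $Hy$.

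Nothing here is conceptually deep. The only real care is in the bookkeeping of the first step — tracking exactly which mixed terms are annihilated by \eqref{eq:action-1} and which by \eqref{eq:action-2} — and in the index manipulation in the proof of (a); the two-piece decomposition is merely a device to keep that routine computation short and transparent.
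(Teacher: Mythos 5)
Your proof is correct and takes essentially the same route as the paper: a direct verification of the $1$-cocycle identity for $\delta(\mathfrak{X})$ with respect to $\rho_{H}$, using the crossed-homomorphism identity \eqref{eq:crossed-homo}, the action axioms \eqref{eq:action-1}--\eqref{eq:action-2}, the representation axiom \eqref{representation-2} and the Fundamental Identity \eqref{eq:jacobi1}. Your split of $\delta(\mathfrak{X})$ into the central part $L_{\mathfrak{X}}$ and the bracket part is just a tidier bookkeeping of the same cancellations that the paper performs in one long expansion.
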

\begin{proof}
For all $x_1,x_2,x_3\in \g,$ by \eqref{eq:jacobi1}-\eqref{eq:crossed-homo}, we have
\begin{eqnarray*}
&&({\rm d}_{\rho_{H}}\delta(\mathfrak{X}))(x_1,x_2,x_3)\\
&=&\rho(x_1,x_2)\delta(\mathfrak{X})(x_3)+\rho(x_2,x_3)\delta(\mathfrak{X})(x_1)+\rho(x_3,x_1)\delta(\mathfrak{X})(x_2)-\delta(\mathfrak{X})([x_1,x_2,x_3]_{\g})\\
&&+[Hx_1,Hx_2,\delta(\mathfrak{X})x_3]_{\h}+[\delta(\mathfrak{X})x_1,Hx_2,Hx_3]_{\h}+[Hx_1,\delta(\mathfrak{X})x_2,Hx_3]_{\h}\\
&=&\rho(x_1,x_2)\Big([Hx,Hy,Hx_3]_{\h}+\rho(y,x_3)(Hx)+\rho(x_3,x)(Hy)\Big)\\
&&+\rho(x_2,x_3)\Big([Hx,Hy,Hx_1]_{\h}+\rho(y,x_1)(Hx)+\rho(x_1,x)(Hy)\Big)\\
&&+\rho(x_3,x_1)\Big([Hx,Hy,Hx_2]_{\h}+\rho(y,x_2)(Hx)+\rho(x_2,x)(Hy)\Big)\\
&&+[Hx_1,Hx_2,[Hx,Hy,Hx_3]_{\h}]_{\h}+[Hx_1,Hx_2,\rho(y,x_3)(Hx)]_{\h}+[Hx_1,Hx_2,\rho(x_3,x)(Hy)]_{\h}\\
&&+[Hx_2,Hx_3,[Hx,Hy,Hx_1]_{\h}]_{\h}+[Hx_2,Hx_3,\rho(y,x_1)(Hx)]_{\h}+[Hx_2,Hx_3,\rho(x_1,x)(Hy)]_{\h}\\
&&+[Hx_3,Hx_1,[Hx,Hy,Hx_2]_{\h}]_{\h}+[Hx_3,Hx_1,\rho(y,x_2)(Hx)]_{\h}+[Hx_3,Hx_1,\rho(x_2,x)(Hy)]_{\h}\\
&&-[Hx,Hy,H[x_1,x_2,x_3]_{\g}]_{\h}-\rho(y,[x_1,x_2,x_3]_{\g})(Hx)-\rho([x_1,x_2,x_3]_{\g},x)(Hy)\\
&=&0.
\end{eqnarray*}
Thus, we deduce that ${\rm d}_{\rho_{H}}\delta(\mathfrak{X})=0.$ The proof is finished.
\end{proof}
We now give the cohomology of crossed homomorphisms on $3$-Lie algebras.

Let $H$ be a crossed homomorphism from a $3$-Lie algebra $(\g,[\cdot,\cdot,\cdot]_{\g})$ to a $3$-Lie algebra $(\h,[\cdot,\cdot,\cdot]_{\h})$ with respect to an action $\rho.$ Define the set of $n$-cochains by
\begin{eqnarray}\label{crossed-cochain}
\mathfrak C_{H}^{n}(\g;\h)=
\left\{\begin{array}{rcl}
{}\mathfrak C_{\Li}^{n-1}(\g;\h),\quad n\geq 2,\\
{}\g\wedge\g,\quad n=1.
\end{array}\right.
\end{eqnarray}

Define ${\partial}:\mathfrak C_{H}^{n}(\g;\h)\rightarrow \mathfrak C_{H}^{n+1}(\g;\h)$ by
\begin{eqnarray}\label{crossed-cohomology}
{\partial}=
\left\{\begin{array}{rcl}
{}{\rm d}_{\rho_{H}},\quad n\geq 2,\\
{}\delta,\quad n=1.
\end{array}\right.
\end{eqnarray}
Then $(\mathop{\oplus}\limits_{n=1}^{\infty} \mathfrak C_{H}^{n}(\g;\h),\partial)$ is a cochain complex. Denote the set of $n$-cocycles by $\huaZ^n_H(\g;\h),$ the set of $n$-coboundaries by $\huaB^n_H(\g;\h)$ and $n$-th cohomology group by
\begin{eqnarray}\label{crossed-cohomology-group}
\huaH^n_H(\g;\h)=\huaZ_H^n(\g;\h)/\huaB_H^n(\g;\h),\quad n\geq1.
\end{eqnarray}

\begin{defi}
The cohomology of the cochain complex $(\mathop{\oplus}\limits_{n=1}^{\infty} \mathfrak C_{H}^{n}(\g;\h),\partial)$  is taken to be the {\bf cohomology for the crossed homomorphism $H$}.
\end{defi}

At the end of this subsection, we show that certain homomorphisms between crossed homomorphisms induce homomorphisms between the corresponding cohomology groups.
Let $H$ and $H'$ be two crossed homomorphisms from a $3$-Lie algebra $(\g,[\cdot,\cdot,\cdot]_{\g})$ to a $3$-Lie algebra $(\h,[\cdot,\cdot,\cdot]_{\h})$ with respect to an action $\rho.$ Let $(\psi_\g, \psi_\h)$ be a homomorphism from $H$ to $H'$ in which $\psi_\g$ is invertible. Define a map $p:\mathfrak C_{H}^{n}(\g;\h)\rightarrow \mathfrak C_{H'}^{n}(\g;\h)$ by
\begin{eqnarray*}
p(\omega)(\mathfrak{X}_1,\cdots,\mathfrak{X}_{n-2},x_{n-1})=\psi_{\h}\Bigg(\omega\Big(\psi^{-1}_{\g}(x_1)\wedge\psi^{-1}_{\g}(y_1),\cdots,\psi^{-1}_{\g}(x_{n-2})\wedge\psi^{-1}_{\g}(y_{n-2}),\psi^{-1}_{\g}(x_{n-1})\Big)\Bigg),
\end{eqnarray*}
for all $\omega\in \mathfrak C_{H}^{n}(\g;\h),\mathfrak{X}_i=x_i\wedge y_i\in \wedge^2\g,~ i=1,2,\cdots,n-2$ and $x_{n-1}\in \g.$

\begin{thm}
With above notations, $p$ is a cochain map from the cochain complex $(\mathop{\oplus}\limits_{n=2}^{\infty} \mathfrak C_{H}^{n}(\g;\h),{\rm d}_{\rho_{H}})$
to the cochain complex $(\mathop{\oplus}\limits_{n=2}^{\infty} \mathfrak C_{H'}^{n}(\g;\h),{\rm d}_{\rho_{H'}})$. Consequently, it induces a homomorphism $p_*$ from the
cohomology group $\huaH^{n}_H(\g;\h)$ to $\huaH^{n}_{H'}(\g;\h)$.
\end{thm}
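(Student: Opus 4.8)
The plan is to verify directly that $p$ commutes with the coboundary operators, i.e.\ that $p\circ {\rm d}_{\rho_H} = {\rm d}_{\rho_{H'}}\circ p$ on $\mathfrak C_H^n(\g;\h)$ for all $n\geq 2$; once this is established, $p$ descends to a map $p_*$ on cohomology, and that $p_*$ is a homomorphism of (graded) vector spaces is automatic. The essential point is that $(\psi_\g,\psi_\h)$ is compatible with every piece of the formula defining ${\rm d}_{\rho_H}$: the $3$-Lie bracket $[\cdot,\cdot,\cdot]_\g$ (since $\psi_\g$ is a $3$-Lie algebra homomorphism), the bracket $[\cdot,\cdot,\cdot]_\h$ (since $\psi_\h$ is a $3$-Lie algebra homomorphism), the action $\rho$ (by \eqref{condition-2}), and the crossed homomorphism itself (by \eqref{condition-1}, $\psi_\h\circ H = H'\circ\psi_\g$). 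In particular the twisted term $[Hx_i,Hy_i,-]_\h$ appearing in ${\rm d}_{\rho_H}$ gets transported by $\psi_\h$ to $[H'\psi_\g(x_i),H'\psi_\g(y_i),-]_\h$, which is exactly the corresponding term of ${\rm d}_{\rho_{H'}}$; this is the place where \eqref{condition-1} is used in an essential way, and it is what makes the statement nontrivial beyond the classical $3$-Lie algebra functoriality of cohomology.

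Concretely, I would fix $\omega\in\mathfrak C_H^n(\g;\h)$ and elements $\mathfrak X_i = x_i\wedge y_i\in\wedge^2\g$ for $i=1,\dots,n$ and $x_{n+1}\in\g$, and compute $\big(p({\rm d}_{\rho_H}\omega)\big)(\mathfrak X_1,\dots,\mathfrak X_n,x_{n+1})$ by substituting $\psi_\g^{-1}(x_i), \psi_\g^{-1}(y_i)$ for the arguments and applying $\psi_\h$ outside. In each of the five types of summands in the formula for ${\rm d}_{\rho_H}$, I would push $\psi_\h$ inside: for the bracket-in-$\g$ terms I use $\psi_\g[a,b,c]_\g=[\psi_\g a,\psi_\g b,\psi_\g c]_\g$ to rewrite, say, $[\psi_\g^{-1}(x_i),\psi_\g^{-1}(y_i),\psi_\g^{-1}(x_k)]_\g = \psi_\g^{-1}[x_i,y_i,x_k]_\g$; for the $\rho$-terms I use \eqref{condition-2} in the form $\psi_\h(\rho(\psi_\g^{-1}x,\psi_\g^{-1}y)v)=\rho(x,y)\psi_\h(v)$ together with the fact that $\psi_\h^{-1}$ need not appear since $\omega$ already lands in $\h$ and we only insert $\psi_\h$ at the end — more precisely one commutes $\psi_\h$ past $\rho(\psi_\g^{-1}x_i,\psi_\g^{-1}y_i)$ at the cost of replacing it by $\rho(x_i,y_i)$; for the twisted terms $[H\psi_\g^{-1}(x_i),H\psi_\g^{-1}(y_i),-]_\h$ I use \eqref{condition-1} to get $H\psi_\g^{-1}(x_i)=\psi_\h^{-1}H'(x_i)$, hence $\psi_\h[H\psi_\g^{-1}(x_i),H\psi_\g^{-1}(y_i),\omega(\cdots)]_\h = [H'(x_i),H'(y_i),\psi_\h(\omega(\cdots))]_\h$ since $\psi_\h$ is a $3$-Lie homomorphism; and for the last two boundary-type terms involving $x_{n+1}$ one argues identically. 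Collecting the results, every summand of $p({\rm d}_{\rho_H}\omega)$ matches the corresponding summand of ${\rm d}_{\rho_{H'}}(p\omega)$, which proves the claim.

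The proof is essentially a bookkeeping exercise, so there is no deep obstacle; the main thing to be careful about is the interplay between $\psi_\h$ and $\psi_\h^{-1}$ in the $\rho$-terms and twisted terms — one must track precisely where the inverse is needed (it is needed only to rewrite $H\psi_\g^{-1}$ as $\psi_\h^{-1}H'$, and this inverse is then cancelled by the outer $\psi_\h$), and one must use the invertibility of $\psi_\g$ to ensure the substitution $\psi_\g^{-1}(x_i)$ is well-defined and that $p$ is genuinely a map into $\mathfrak C_{H'}^n(\g;\h)$. Once $p\circ{\rm d}_{\rho_H} = {\rm d}_{\rho_{H'}}\circ p$ is verified, $p$ maps $\huaZ_H^n(\g;\h)$ into $\huaZ_{H'}^n(\g;\h)$ and $\huaB_H^n(\g;\h)$ into $\huaB_{H'}^n(\g;\h)$, and hence induces the homomorphism $p_*\colon \huaH_H^n(\g;\h)\to\huaH_{H'}^n(\g;\h)$, as asserted.
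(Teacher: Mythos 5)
Your proposal is correct and follows essentially the same route as the paper: a term-by-term verification that $p\circ{\rm d}_{\rho_H}={\rm d}_{\rho_{H'}}\circ p$, using that $\psi_\g$ and $\psi_\h$ are $3$-Lie algebra homomorphisms together with \eqref{condition-1} and \eqref{condition-2}, after which the induced map $p_*$ on cohomology is automatic. One small wording caution: the paper only assumes $\psi_\g$ invertible, so you should not write $H\psi_\g^{-1}(x_i)=\psi_\h^{-1}H'(x_i)$; instead obtain the same identity you need directly, namely $\psi_\h[H\psi_\g^{-1}(x_i),H\psi_\g^{-1}(y_i),\omega(\cdots)]_\h=[H'x_i,H'y_i,\psi_\h(\omega(\cdots))]_\h$, by applying the homomorphism $\psi_\h$ to the bracket and then using $\psi_\h\circ H=H'\circ\psi_\g$ — which is exactly what the paper does.
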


\begin{proof}
For all $\omega\in \mathfrak C_{H}^{n}(\g;\h)$, by \eqref{condition-1}-\eqref{condition-2} and \eqref{crossed-cochain}-\eqref{crossed-cohomology-group}, we have
\begin{align*}
&{\rm d}_{\rho_{H'}}(p(\omega))(\mathfrak{X}_1,\cdots,\mathfrak{X}_{n-1},x_{n})\\
=&\sum_{1\leq i<k\leq n-1}(-1)^{i} p(\omega)(\mathfrak{X}_1,\cdots,\hat{\mathfrak{X}_{i}},\cdots,\mathfrak{X}_{k-1},
[x_i,y_i,x_k]_{\g}\wedge y_k\\
&+x_k\wedge[x_i,y_i,y_k]_{\g},
\mathfrak{X}_{k+1},\cdots,\mathfrak{X}_{n-1},x_{n})\\
&+\sum_{i=1}^{n-1}(-1)^{i}p(\omega)(\mathfrak{X}_1,\cdots,\hat{\mathfrak{X}_{i}},\cdots,\mathfrak{X}_{n},
[x_i,y_i,x_{n}]_{\g})\\&
+\sum_{i=1}^{n-1}(-1)^{i+1}\rho(x_i,y_i)p(\omega)(\mathfrak{X}_1,\cdots,\hat{\mathfrak{X}_{i}},
\cdots,\mathfrak{X}_{n-1},x_{n})\\&
+\sum_{i=1}^{n-1}(-1)^{i+1}[H'x_i,H'y_i,p(\omega)(\mathfrak{X}_1,\cdots,\hat{\mathfrak{X}_{i}},
\cdots,\mathfrak{X}_{n-1},x_{n})]_{\h}\\&
+(-1)^{n}\Big(\rho(y_{n-1},x_{n})p(\omega)(\mathfrak{X}_1,\cdots,\mathfrak{X}_{n-2},x_{n-1})+[H'y_{n-1},H'x_{n},p(\omega)(\mathfrak{X}_1,\cdots,\mathfrak{X}_{n-2},x_{n-1})]_{\h} \Big)\\&
+(-1)^{n}\Big(\rho(x_{n},x_{n-1})p(\omega)(\mathfrak{X}_1,\cdots,\mathfrak{X}_{n-2},y_{n-1})+[H'x_{n},H'x_{n-1},p(\omega)(\mathfrak{X}_1,\cdots,\mathfrak{X}_{n-2},y_{n-1})]_{\h}\Big)\\
=&\sum_{1\leq i<k\leq n-1}(-1)^i\psi_{\h}\Big(\omega(\psi^{-1}_{\g}(x_1)\wedge\psi^{-1}_{\g}(y_1),\cdots,\hat{\psi^{-1}_{\g}(x_i)}\wedge\hat{\psi^{-1}_{\g}(y_i)},\cdots,\psi^{-1}_{\g}(x_{k-1})\wedge\psi^{-1}_{\g}(y_{k-1}),\\
&\psi^{-1}_{\g}([x_i,y_i,x_k]_{\g})\wedge \psi^{-1}_{\g}(y_k)+\psi^{-1}_{\g}(x_k)\wedge\psi^{-1}_{\g}([x_i,y_i,y_k]_{\g}),\cdots,\psi^{-1}_{\g}(x_{n-1})\wedge\psi^{-1}_{\g}(y_{n-1}),\psi^{-1}_{\g}(x_{n}))\Big)\\
&+\sum_{i=1}^{n-1}(-1)^{i}\psi_{\h}\Big(\omega(\psi^{-1}_{\g}(x_1)\wedge\psi^{-1}_{\g}(y_1),\cdots,
\hat{\psi^{-1}_{\g}(x_i)}\wedge\hat{\psi^{-1}_{\g}(y_i)},\cdots,\\&
\psi^{-1}_{\g}(x_{n-1})\wedge\psi^{-1}_{\g}(y_{n-1}),\psi^{-1}_{\g}([x_i,y_i,x_{n}]_{\g}))\Big)+\sum_{i=1}^{n-1}(-1)^{i+1}\rho(x_i,y_i)\psi_{\h}\Big(\omega(\psi^{-1}_{\g}(x_1)\wedge\psi^{-1}_{\g}(y_1),\cdots,\\
&\hat{\psi^{-1}_{\g}(x_i)}\wedge\hat{\psi^{-1}_{\g}(y_i)},\cdots,\psi^{-1}_{\g}(x_{n-1})\wedge\psi^{-1}_{\g}(y_{n-1}),\psi^{-1}_{\g}(x_{n}))\Big)\\&
+\sum_{i=1}^{n-1}(-1)^{i+1}[H'x_i,H'y_i,\psi_{\h}\Big(\omega(\psi^{-1}_{\g}(x_1)\wedge\psi^{-1}_{\g}(y_1),\cdots,\\
&\hat{\psi^{-1}_{\g}(x_i)}\wedge\hat{\psi^{-1}_{\g}(y_i)},\cdots,\psi^{-1}_{\g}(x_{n-1})\wedge\psi^{-1}_{\g}(y_{n-1}),\psi^{-1}_{\g}(x_{n}))\Big)]_{\h}\\&
+(-1)^{n}\Big(\rho(y_{n-1},x_{n})\psi_{\h}\Big(\omega(\psi^{-1}_{\g}(x_1)\wedge\psi^{-1}_{\g}(y_1),\cdots,\psi^{-1}_{\g}(x_{n-2})\wedge\psi^{-1}_{\g}(y_{n-2}),\psi^{-1}_{\g}(x_{n-1}))\Big)\\
&+[H'y_{n-1},H'x_{n},\psi_{\h}\Big(\omega(\psi^{-1}_{\g}(x_1)\wedge\psi^{-1}_{\g}(y_1),\cdots,\psi^{-1}_{\g}(x_{n-2})\wedge\psi^{-1}_{\g}(y_{n-2}),\psi^{-1}_{\g}(x_{n-1}))\Big)]_{\h} \Big)\\&
+(-1)^{n}\Big(\rho(x_{n},x_{n-1})\psi_{\h}\Big(\omega(\psi^{-1}_{\g}(x_1)\wedge\psi^{-1}_{\g}(y_1),\cdots,\psi^{-1}_{\g}(x_{n-2})\wedge\psi^{-1}_{\g}(y_{n-2}),\psi^{-1}_{\g}(y_{n-1}))\Big)\\
&+[H'x_{n},H'x_{n-1},\psi_{\h}\Big((\omega(\psi^{-1}_{\g}(x_1)\wedge\psi^{-1}_{\g}(y_1),\cdots,\psi^{-1}_{\g}(x_{n-2})\wedge\psi^{-1}_{\g}(y_{n-2}),\psi^{-1}_{\g}(y_{n-1}))\Big)]_{\h}\Big)\\
=&\sum_{1\leq i<k\leq n-1}(-1)^i\psi_{\h}\Big(\omega(\psi^{-1}_{\g}(x_1)\wedge\psi^{-1}_{\g}(y_1),\cdots,\hat{\psi^{-1}_{\g}(x_i)}\wedge\hat{\psi^{-1}_{\g}(y_i)},\cdots,\psi^{-1}_{\g}(x_{k-1})\wedge\psi^{-1}_{\g}(y_{k-1}),\\
&([\psi^{-1}_{\g}(x_i),\psi^{-1}_{\g}(y_i),\psi^{-1}_{\g}(x_k)]_{\g})\wedge \psi^{-1}_{\g}(y_k)+\psi^{-1}_{\g}(x_k)\wedge([\psi^{-1}_{\g}(x_i),\psi^{-1}_{\g}(y_i),\psi^{-1}_{\g}(y_k)]_{\g}),\cdots,\\&
\psi^{-1}_{\g}(x_{n-1})\wedge\psi^{-1}_{\g}(y_{n-1}),\psi^{-1}_{\g}(x_{n}))\Big)
+\sum_{i=1}^{n-1}(-1)^{i}\psi_{\h}\Big(\omega(\psi^{-1}_{\g}(x_1)\wedge\psi^{-1}_{\g}(y_1),\cdots,
\hat{\psi^{-1}_{\g}(x_i)}\wedge\hat{\psi^{-1}_{\g}(y_i)},\cdots,\\&
\psi^{-1}_{\g}(x_{n-1})\wedge\psi^{-1}_{\g}(y_{n-1}),([\psi^{-1}_{\g}(x_i),\psi^{-1}_{\g}(y_i),\psi^{-1}_{\g}(x_{n})]_{\g}))\Big)+\sum_{i=1}^{n-1}(-1)^{i+1}\psi_{\h}\rho(\psi^{-1}_{\g}(x_i),\psi^{-1}_{\g}(y_i))\\
&\Big(\omega(\psi^{-1}_{\g}(x_1)\wedge\psi^{-1}_{\g}(y_1),\cdots,\hat{\psi^{-1}_{\g}(x_i)}\wedge\hat{\psi^{-1}_{\g}(y_i)},\cdots,\psi^{-1}_{\g}(x_{n-1})\wedge\psi^{-1}_{\g}(y_{n-1}),\psi^{-1}_{\g}(x_{n}))\Big)\\&
+\sum_{i=1}^{n-1}(-1)^{i+1}\psi_{\h}[H(\psi^{-1}_{\g}(x_i)),H(\psi^{-1}_{\g}(y_i)),\omega(\psi^{-1}_{\g}(x_1)\wedge\psi^{-1}_{\g}(y_1),\cdots,\\
&\hat{\psi^{-1}_{\g}(x_i)}\wedge\hat{\psi^{-1}_{\g}(y_i)},\cdots,\psi^{-1}_{\g}(x_{n-1})\wedge\psi^{-1}_{\g}(y_{n-1}),\psi^{-1}_{\g}(x_{n}))]_{\h}\\&
+(-1)^{n}\psi_{\h}\Big(\rho(\psi^{-1}_{\g}(y_{n-1}),\psi^{-1}_{\g}(x_{n}))\omega(\psi^{-1}_{\g}(x_1)\wedge\psi^{-1}_{\g}(y_1),\cdots,\psi^{-1}_{\g}(x_{n-2})\wedge\psi^{-1}_{\g}(y_{n-2}),\psi^{-1}_{\g}(x_{n-1}))\\
&+[H\psi^{-1}_{\g}(y_{n-1}),H\psi^{-1}_{\g}(x_{n}),\omega(\psi^{-1}_{\g}(x_1)\wedge\psi^{-1}_{\g}(y_1),\cdots,\psi^{-1}_{\g}(x_{n-2})\wedge\psi^{-1}_{\g}(y_{n-2}),\psi^{-1}_{\g}(x_{n-1})) ]_{\h} \Big)\\&
+(-1)^{n}\psi_{\h}\Big(\rho(\psi^{-1}_{\g}(x_{n}),\psi^{-1}_{\g}(x_{n-1}))\omega(\psi^{-1}_{\g}(x_1)\wedge\psi^{-1}_{\g}(y_1),\cdots,\psi^{-1}_{\g}(x_{n-2})\wedge\psi^{-1}_{\g}(y_{n-2}),\psi^{-1}_{\g}(y_{n-1}))\\
&+[H\psi^{-1}_{\g}(x_{n}),H\psi^{-1}_{\g}(x_{n-1}),\omega(\psi^{-1}_{\g}(x_1)\wedge\psi^{-1}_{\g}(y_1),\cdots,\psi^{-1}_{\g}(x_{n-2})\wedge\psi^{-1}_{\g}(y_{n-2}),\psi^{-1}_{\g}(y_{n-1}))]_{\h}\Big)\\
=&\psi_{\h}({\rm d}_{\rho_{H}}\omega)\Big(\psi^{-1}_{\g}(x_1)\wedge\psi^{-1}_{\g}(y_1),\cdots,\psi^{-1}_{\g}(x_{n-1})\wedge\psi^{-1}_{\g}(y_{n-1}),\psi^{-1}_{\g}(x_{n})\Big)\\
=&p({\rm d}_{\rho_{H}}\omega)(\mathfrak{X}_1,\cdots,\mathfrak{X}_{n-1},x_{n}),
\end{align*}
where $\mathfrak{X}_i=x_i\wedge y_i\in \wedge^2\g,~ i=1,2,\cdots,n-1$ and $x_{n}\in \g.$
Thus $p$ is a cochain map, and induces a homomorphism $p_*$ from the cohomology
group $\huaH^{n}_H(\g;\h)$ to $\huaH^{n}_{H'}(\g;\h)$.
\end{proof}
\subsection{Infinitesimal deformations of crossed homomorphisms}\label{sec:defor}

In this section, we use the established cohomology theory to characterize  infinitesimal deformations of crossed homomorphisms on 3-Lie algebras.

Let $(\g,[\cdot,\cdot,\cdot]_{\g})$ be a $3$-Lie algebra over $\mathbb K$ and $\mathbb K[t]$ be the polynomial ring in one variable $t.$
Then $\mathbb K[t]/(t^2)\otimes_{\mathbb K}\g$ is an $\mathbb K[t]/(t^2)$-module. Moreover, $\mathbb K[t]/(t^2)\otimes_{\mathbb K}\g$ is a $3$-Lie algebra over $\mathbb K[t]/(t^2)$, where the $3$-Lie algebra structure is defined by
\begin{eqnarray*}
[f_1(t)\otimes_{\mathbb K} x_1,f_2(t)\otimes_{\mathbb K} x_2,f_3(t)\otimes_{\mathbb K} x_3]= f_1(t)f_2(t) f_3(t)\otimes_{\mathbb K}[x_1,x_2,x_3]_{\g},
\end{eqnarray*}
for $f_{i}(t)\in \mathbb K[t]/(t^2),1\leq i\leq 3,x_1,x_2,x_3\in \g.$

In the sequel, all the vector spaces are finite dimensional vector spaces over $\mathbb K$ and we denote $f(t)\otimes_{\mathbb K} x$ by $f(t)x,$ where $f(t)\in \mathbb K[t]/(t^2).$

\begin{defi}
Let $H:\g\rightarrow \h$ be a crossed homomorphism from a $3$-Lie algebra $(\g,[\cdot,\cdot,\cdot]_\g)$ to a $3$-Lie algebra $(\h,[\cdot,\cdot,\cdot]_\h)$ with respect to an action $\rho$. Let
$\frkH:\g\rightarrow\h$ be a linear map. If $H_t=H+t\frkH$ is still a crossed homomorphism
 modulo $t^2$, they we say that $\frkH$ generates an {\bf infinitesimal deformation} of the crossed homomorphism $H$.
\end{defi}

Since $H_t=H+t\frkH $ is a crossed homomorphism, for any $x,y,z\in \h,$
we have
\begin{eqnarray}
\label{equivalent-1}\qquad\frkH[x,y,z]_{\g}&=&\rho(x,y)\frkH(z)+\rho(y,z)\frkH(x)+\rho(z,x)\frkH(y)\\
\nonumber                                             &&+[\frkH(x),Hy,Hz]_{\h}+[Hx,\frkH(y),Hz]_{\h}+[H(x),Hy,\frkH(z)]_{\h};
\end{eqnarray}

 Note that \eqref{equivalent-1} means that $\frkH$ is a $2$-cocycle of the crossed homomorphism  $H$. Hence, $\frkH$ defines a cohomology class in $\huaH^2_{H}(\g;\h)$.

\begin{defi}
Let $H$ be a crossed homomorphism from a $3$-Lie algebra $(\g,[\cdot,\cdot,\cdot]_\g)$ to a $3$-Lie algebra $(\h,[\cdot,\cdot,\cdot]_\h)$ with respect to an action $\rho$. Two one-parameter infinitesimal deformations $H^1_{t}=H+t\frkH_{1}$ and  $H^2_{t}=H+t\frkH_{2}$ are said to be {\bf equivalent} if there exists $\mathfrak{X}\in\g\wedge\g$ such that $(\Id_{\g}+t\ad_{\mathfrak{X}},\Id_{\h}+t\rho(\mathfrak{X}))$ is a homomorphism modulo $t^2$ from $H^1_{t}$ to $H^2_{t}$.
In particular, an infinitesimal deformation $H_{t}=H+t\frkH_{1}$ of a crossed homomorphism  $H$ is said to be {\bf trivial} if there exists $\mathfrak{X}\in \g\wedge\g$ such that $(\Id_{\g}+t\ad_{\mathfrak{X}},\Id_{\h}+t\rho(\mathfrak{X}))$ is a homomorphism modulo $t^2$  from $H_{t}$ to $H.$
\end{defi}

Let $(\Id_{\g}+t\ad_{\mathfrak{X}},\Id_{\h}+t\rho(\mathfrak{X}))$ be a homomorphism modulo $t^2$ from $H^1_{t}$ to $H^2_{t}.$
By \eqref{condition-1}, we get,
\begin{equation*}
(\Id_\h+t\rho(\mathfrak{X}))(H+t\frkH_1)(z)=(H+t\frkH_2)(\Id_{\g}+t\ad_{\mathfrak{X}})(z),\quad \forall\mathfrak{X}=x\wedge y\in\wedge^2\g, z\in \g.
\end{equation*}
which implies
\begin{eqnarray}\label{Nijenhuis-element-4}
\frkH_1(z)-\frkH_2(z)&=&\rho(y,z)(Hx)+\rho(z,x)(Hy)+[Hx,Hy,Hz]_{\h}.
\end{eqnarray}
\emptycomment{
\begin{eqnarray}\label{Nijenhuis-element-4}
\left\{\begin{array}{rcl}
{}\frkH_1(z)-\frkH_2(z)&=&\rho(y,z)(Hx)+\rho(z,x)(Hy)+[Hx,Hy,Hz]_{\h},\\
{}\frkH_2[x,y,z]_{\g}&=&\rho(x,y)\frkH_1(x).
\end{array}\right.
\end{eqnarray}}

Now we are ready to give the main result in this section.
\begin{thm}
Let $H$ be a crossed homomorphism from a $3$-Lie algebra $(\g,[\cdot,\cdot,\cdot]_\g)$ to a $3$-Lie algebra $(\h,[\cdot,\cdot,\cdot]_\h)$ with respect to an action $\rho$.
If two one-parameter infinitesimal deformations $H^1_{t}=H+t\frkH_{1}$ and $H^2_{t}=H+t\frkH_{2}$ are equivalent, then $\frkH_{1}$ and $\frkH_{2}$
 are in the same cohomology class in $\huaH^2_{H}(\g;\h)$.
\end{thm}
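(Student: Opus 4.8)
The plan is to unwind the definition of equivalence and read off a coboundary. By hypothesis there exists $\mathfrak{X}\in\g\wedge\g$ such that $(\Id_\g+t\ad_\mathfrak{X},\Id_\h+t\rho(\mathfrak{X}))$ is a homomorphism modulo $t^2$ from $H^1_t$ to $H^2_t$. The condition \eqref{condition-1} for this homomorphism, expanded modulo $t^2$, has already been recorded in \eqref{Nijenhuis-element-4}: writing $\mathfrak{X}=x\wedge y$, for all $z\in\g$ one has
\begin{eqnarray*}
\frkH_1(z)-\frkH_2(z)=\rho(y,z)(Hx)+\rho(z,x)(Hy)+[Hx,Hy,Hz]_\h.
\end{eqnarray*}
First I would recall the definition of $\delta:\wedge^2\g\to\Hom(\g,\h)$ given earlier, namely
\begin{eqnarray*}
\delta(\mathfrak{X})z=\rho(y,z)H(x)+\rho(z,x)H(y)+[Hx,Hy,Hz]_\h.
\end{eqnarray*}
Comparing the two displays we see immediately that $\frkH_1-\frkH_2=\delta(\mathfrak{X})$.

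Next I would interpret this identity cohomologically. By \eqref{crossed-cochain}, the space of $1$-cochains of the crossed homomorphism $H$ is $\mathfrak C_H^1(\g;\h)=\g\wedge\g$, and by \eqref{crossed-cohomology} the differential $\partial\colon\mathfrak C_H^1(\g;\h)\to\mathfrak C_H^2(\g;\h)$ in degree $1$ is exactly $\delta$. Hence $\frkH_1-\frkH_2=\delta(\mathfrak{X})=\partial(\mathfrak{X})\in\huaB_H^2(\g;\h)$ is a $2$-coboundary for the cohomology of $H$. Since both $\frkH_1$ and $\frkH_2$ are $2$-cocycles (each generates an infinitesimal deformation, so each satisfies \eqref{equivalent-1}, which was observed to be precisely the $2$-cocycle condition), it follows that $\frkH_1$ and $\frkH_2$ represent the same class in $\huaH^2_H(\g;\h)=\huaZ_H^2(\g;\h)/\huaB_H^2(\g;\h)$, which is the claim.

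There is essentially no obstacle here: the only point that requires a line of care is the passage from the abstract statement that $(\Id_\g+t\ad_\mathfrak{X},\Id_\h+t\rho(\mathfrak{X}))$ is a homomorphism modulo $t^2$ to the explicit formula \eqref{Nijenhuis-element-4}. One should note that the second homomorphism condition \eqref{condition-2} is automatic at order $t$ because $\rho(\mathfrak{X})$ takes values in the centre $\huaC(\h)$ and $\rho$ kills brackets of $\h$ (these are exactly the axioms \eqref{eq:action-1}--\eqref{eq:action-2} of an action, together with the representation identities for $\ad$ and $\rho$), while the $\psi_\g$ and $\psi_\h$ being $3$-Lie algebra homomorphisms modulo $t^2$ is the standard Nijenhuis-type computation; since the problem only asks to match cohomology classes, I would simply invoke \eqref{Nijenhuis-element-4} as already derived and conclude. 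Thus the proof is a direct comparison of formulas followed by the observation that $\delta=\partial$ on $1$-cochains.
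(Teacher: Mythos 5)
Your argument is correct and follows the paper's own proof: both rely on the identity \eqref{Nijenhuis-element-4} obtained from the homomorphism condition \eqref{condition-1}, recognize its right-hand side as $\delta(\mathfrak{X})=\partial(\mathfrak{X})$, and conclude that $\frkH_1-\frkH_2\in\huaB_H^2(\g;\h)$, so the two deformations lie in the same class of $\huaH^2_H(\g;\h)$. Your additional remark that \eqref{condition-2} holds automatically at order $t$ is a welcome extra check but does not change the route.
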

\begin{proof}
 It is easy to see from the condition \eqref{Nijenhuis-element-4} that
\begin{eqnarray*}
 \frkH_1(z)&=& \frkH_2(z)+(\partial\mathfrak{X})(z),\quad \forall z\in \g,
\end{eqnarray*}
which implies that $\frkH_1$ and $\frkH_2$ are in the same cohomology class.
\end{proof}

\section{Maurer-Cartan characterization of crossed homomorphisms on $3$-Lie algebras}\label{sec:four}
In this section, we construct a suitable $L_{\infty}$-algebra,
which characterize crossed homomorphisms on $3$-Lie algebras  as Maurer-Cartan elements. Then we construct a twisted $L_{\infty}$-algebra
that controls deformations of crossed homomorphisms.
\begin{defi}
An {\em  $L_\infty$-algebra} is a $\mathbb Z$-graded vector space $\g=\oplus_{k\in\mathbb Z}\g^k$ equipped with a collection $(k\ge 1)$ of linear maps $l_k:\otimes^k\g\lon\g$ of degree $1$ with the property that, for any homogeneous elements $x_1,\cdots,x_n\in \g$, we have
\begin{itemize}\item[\rm(i)]
{\em (graded symmetry)} for every $\sigma\in\mathbb S_{n}$,
\begin{eqnarray*}
l_n(x_{\sigma(1)},\cdots,x_{\sigma(n-1)},x_{\sigma(n)})=\varepsilon(\sigma)l_n(x_1,\cdots,x_{n-1},x_n),
\end{eqnarray*}
\item[\rm(ii)] {\em (generalized Jacobi Identity)} for all $n\ge 1$,
\begin{eqnarray*}\label{sh-Lie}
\sum_{i=1}^{n}\sum_{\sigma\in \mathbb S_{(i,n-i)} }\varepsilon(\sigma)l_{n-i+1}(l_i(x_{\sigma(1)},\cdots,x_{\sigma(i)}),x_{\sigma(i+1)},\cdots,x_{\sigma(n)})=0.
\end{eqnarray*}
\end{itemize}
\end{defi}

\begin{defi}
 A {\bf  Maurer-Cartan element} of an $L_\infty$-algebra $(\g=\oplus_{k\in\mathbb Z}\g^k,\{l_i\}_{i=1}^{+\infty})$ is an element $\alpha\in \g^0$ satisfying the Maurer-Cartan equation
\begin{eqnarray}\label{MC-equationL}
\sum_{n=1}^{+\infty} \frac{1}{n!}l_n(\alpha,\cdots,\alpha)=0.
\end{eqnarray}
\end{defi}
Let $\alpha$ be a Maurer-Cartan element of an $L_\infty$-algebra $(\g,\{l_i\}_{i=1}^{+\infty})$. For all $k\geq1$ and $x_1,\cdots,x_k\in \g,$
define a series of linear maps $l_k^\alpha:\otimes^k\g\lon\g$ of degree $1$ by
\begin{eqnarray}
 l^{\alpha}_{k}(x_1,\cdots,x_k)=\sum^{+\infty}_{n=0}\frac{1}{n!}l_{n+k}\{\underbrace{\alpha,\cdots,\alpha}_n,x_1,\cdots,x_k\}.
\end{eqnarray}

\begin{thm}{\rm (\cite{Getzler})}\label{thm:twist}
With the above notations, $(\g,\{l^{\alpha}_i\}_{i=1}^{+\infty})$ is an $L_{\infty}$-algebra, obtained from the $L_\infty$-algebra $(\g,\{l_i\}_{i=1}^{+\infty})$ by twisting with the Maurer-Cartan element $\alpha$. Moreover, $\alpha+\alpha'$ is a Maurer-Cartan element of $(\g,\{l_i\}_{i=1}^{+\infty})$ if and only if $\alpha'$ is a Maurer-Cartan element of the twisted $L_{\infty}$-algebra  $(\g,\{l^{\alpha}_i\}_{i=1}^{+\infty})$.
\end{thm}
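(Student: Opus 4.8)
The statement to prove is Theorem~\ref{thm:twist}, Getzler's twisting result. Since this is attributed to \cite{Getzler}, the plan is to sketch the standard proof of the twisting procedure for $L_\infty$-algebras, which is a formal computation with the generalized Jacobi identity.

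First I would verify that $(\g,\{l_i^\alpha\}_{i=1}^{+\infty})$ is an $L_\infty$-algebra. Graded symmetry of each $l_k^\alpha$ is immediate from the graded symmetry of the $l_{n+k}$ together with the fact that permuting the $\alpha$-slots (all occupied by the same degree-$0$ element $\alpha$) contributes no sign. The substantive point is the generalized Jacobi identity for the $l_k^\alpha$. The cleanest route is to package everything through the (co)differential picture: to an $L_\infty$-structure on $\g$ there corresponds a degree-$1$ coderivation $D$ on the cofree cocommutative coalgebra $\bar{S}(\g[1])$ with $D^2=0$, and the twisted maps $l_k^\alpha$ correspond to the conjugated coderivation $D^\alpha = e^{-\ad_\alpha}\circ D$ (equivalently, the coderivation whose Taylor coefficients at $0$ are the Taylor coefficients of $D$ expanded around the point $\alpha$). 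The Maurer-Cartan equation \eqref{MC-equationL} is exactly the condition that shifting the base point by $\alpha$ is compatible with $D$, i.e. that $D^\alpha$ is again a well-defined codifferential; then $(D^\alpha)^2 = 0$ follows formally from $D^2=0$. Unwinding the coalgebra language back to the brackets gives precisely the generalized Jacobi identities for $\{l_i^\alpha\}$. If one prefers to stay elementary, the same identity can be proved directly: expand $\sum_i \sum_\sigma \varepsilon(\sigma) l_{n-i+1}^\alpha(l_i^\alpha(\cdots),\cdots)$ by inserting the definition of $l^\alpha$, reorganize the double sum over the number of $\alpha$'s inserted into the inner and outer brackets, and recognize the result as a sum of instances of the untwisted generalized Jacobi identity applied with various numbers of $\alpha$-arguments, where the extra terms involving $\sum_{n}\frac{1}{n!}l_n(\alpha,\dots,\alpha)$ acting on the remaining slots vanish by the Maurer-Cartan equation for $\alpha$.

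Next I would prove the second assertion. Write the Maurer-Cartan operator $\huaMC(\beta):=\sum_{n\ge 1}\frac{1}{n!}l_n(\beta,\dots,\beta)$ for $\beta\in\g^0$. A direct expansion of $\huaMC(\alpha+\alpha')$ by multilinearity and graded symmetry, collecting terms according to how many arguments equal $\alpha$ versus $\alpha'$, yields
\[
\huaMC(\alpha+\alpha') \;=\; \sum_{k\ge 0}\frac{1}{k!}\Big(\sum_{n\ge 0}\frac{1}{n!} l_{n+k}(\underbrace{\alpha,\dots,\alpha}_{n},\underbrace{\alpha',\dots,\alpha'}_{k})\Big) \;=\; \huaMC(\alpha) + \sum_{k\ge 1}\frac{1}{k!}\, l_k^\alpha(\underbrace{\alpha',\dots,\alpha'}_{k}).
\]
Since $\huaMC(\alpha)=0$, the left-hand side vanishes precisely when $\sum_{k\ge 1}\frac{1}{k!}l_k^\alpha(\alpha',\dots,\alpha')=0$, i.e. when $\alpha'$ is a Maurer-Cartan element of $(\g,\{l_i^\alpha\})$. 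This is a purely formal rearrangement of absolutely convergent (in the relevant filtered/pronilpotent setting, or formal) sums.

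The only genuine obstacle is bookkeeping: making the reindexing of nested sums and the signs $\varepsilon(\sigma)$ precise, and ensuring the rearrangements are legitimate (which requires either working in a pro-nilpotent or complete filtered $L_\infty$-algebra, or treating the identities as formal power series identities). Since the ambient $L_\infty$-algebra constructed later in the paper via Voronov's derived brackets is built on a space with a suitable filtration, convergence is not an issue here, and I would simply cite \cite{Getzler} for the detailed verification rather than reproduce the full coalgebra argument, noting that both assertions reduce to the generalized Jacobi identity plus the Maurer-Cartan equation for $\alpha$.
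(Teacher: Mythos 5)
Your proposal is correct, and in fact it supplies more than the paper does: the paper gives no proof of this statement at all, simply quoting it from Getzler's work (and the general theory of twisting by Maurer--Cartan elements), so there is no internal argument to compare against. Your sketch is the standard one: graded symmetry of $l_k^\alpha$ is immediate, the generalized Jacobi identity for the twisted brackets follows either from the codifferential picture (where $l_k^\alpha$ are the Taylor coefficients of the codifferential $D$ re-expanded at the point $\alpha$, and the Maurer--Cartan equation kills the would-be curvature term $l_0^\alpha=\sum_n\frac{1}{n!}l_n(\alpha,\dots,\alpha)$, so that $(D^\alpha)^2=0$ descends to honest $L_\infty$-identities) or from a direct reindexing of the nested sums, and the second assertion is the binomial rearrangement $\sum_m\frac{1}{m!}l_m(\alpha+\alpha',\dots)=\huaMC(\alpha)+\sum_{k\ge1}\frac{1}{k!}l_k^\alpha(\alpha',\dots,\alpha')$, which with $\huaMC(\alpha)=0$ gives the equivalence. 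Two small remarks: the expression $e^{-\ad_\alpha}\circ D$ is loose notation (the precise statement is conjugation of $D$ by the coalgebra automorphism determined by $\alpha$ on the completed symmetric coalgebra, which is exactly your Taylor-coefficient description), and your caveat about convergence, while appropriate for the general statement, is vacuous in this paper's application since the $L_\infty$-algebra constructed in Section 4 has only $l_1$ and $l_3$ nonzero, so all the sums are finite. Citing Getzler for the full verification, as you do and as the paper does, is entirely reasonable.
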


In \cite{Vo}, Th. Voronov developed the theory of higher derived brackets, which is a useful tool to construct explicit $L_\infty$-algebras.
\begin{defi}{\rm (\cite{Vo})}
A {\bf $V$-data} consists of a quadruple $(L,F,\huaP,\Delta)$, where
\begin{itemize}
\item[$\bullet$] $(L,[\cdot,\cdot])$ is a graded Lie algebra,
\item[$\bullet$] $F$ is an abelian graded Lie subalgebra of $(L,[\cdot,\cdot])$,
\item[$\bullet$] $\huaP:L\lon L$ is a projection, that is $\huaP\circ \huaP=\huaP$, whose image is $F$ and kernel is a  graded Lie subalgebra of $(L,[\cdot,\cdot])$,
\item[$\bullet$] $\Delta$ is an element in $  \ker(\huaP)^1$ such that $[\Delta,\Delta]=0$.
\end{itemize}

\begin{thm}{\rm (\cite{Vo})}\label{thm:db}
Let $(L,F,\huaP,\Delta)$ be a $V$-data. Then $(F,\{{l_k}\}_{k=1}^{+\infty})$ is an $L_\infty$-algebra, where
\begin{eqnarray}\label{V-shla}
l_k(a_1,\cdots,a_k)=\huaP\underbrace{[\cdots[[}_k\Delta,a_1],a_2],\cdots,a_k],\quad\mbox{for homogeneous}~   a_1,\cdots,a_k\in F.
\end{eqnarray}
We call $\{{l_k}\}_{k=1}^{+\infty}$ the {\bf higher derived brackets} of the $V$-data $(L,F,\huaP,\Delta)$. 
\end{thm}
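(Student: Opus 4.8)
This is Voronov's higher-derived-bracket theorem, so the plan is to reproduce the argument of \cite{Vo}. A useful preliminary remark is that, since $\Delta$ is odd and $[\Delta,\Delta]=0$, the operator $D:=[\Delta,\cdot]$ is a degree $+1$ differential on $L$: indeed $D^2=\tfrac12[[\Delta,\Delta],\cdot]=0$ by the graded Jacobi identity, so $(L,[\cdot,\cdot],D)$ is a differential graded Lie algebra, $l_1=\huaP\circ D$, and $l_k(a_1,\dots,a_k)=\huaP[\cdots[[Da_1,a_2],a_3],\dots,a_k]$ for $k\ge 2$. The $n=1$ instance of the generalized Jacobi identity is then simply $(\huaP D)^2=0$ on $F$.

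I would first dispose of the graded symmetry (i). For $a_1,\dots,a_k\in F$ one has $[a_i,a_j]=0$ since $F$ is an abelian subalgebra, and the graded Jacobi identity in $L$ shows that interchanging two consecutive arguments of the iterated bracket $[\cdots[[\Delta,a_1],a_2],\dots,a_k]$ only multiplies it by the appropriate Koszul sign: the correction term is always a bracket containing a vanishing $[a_i,a_{i+1}]$ (for the first two slots it is precisely $[\Delta,[a_1,a_2]]=0$). Hence each $l_k$ is graded symmetric in the suspended grading, which is exactly (i); a little care with the degree-shift conventions is needed so that the signs $\varepsilon(\sigma)$ match the ones in the definition.

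The real content of the theorem is the generalized Jacobi identity (ii), and here the plan is to establish the \emph{master identity} of \cite{Vo}: if one defines the maps $l_k$ by the very same formula but starting from an \emph{arbitrary} $\Delta\in\ker\huaP$ of degree $+1$, then the full left-hand side $\sum_{i=1}^{n}\sum_{\sigma\in\mathbb S_{(i,n-i)}}\varepsilon(\sigma)\,l_{n-i+1}(l_i(a_{\sigma(1)},\dots,a_{\sigma(i)}),a_{\sigma(i+1)},\dots,a_{\sigma(n)})$ equals, up to an overall sign, $\huaP$ applied to a single iterated bracket built from the element $[\Delta,\Delta]$ and the arguments $a_1,\dots,a_n$; granting this, the theorem follows immediately from the hypothesis $[\Delta,\Delta]=0$. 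To prove the master identity one expands every composite $l_{n-i+1}(l_i(\cdots),\cdots)$ by repeatedly invoking the graded Jacobi identity in $L$ and commuting the inner $\huaP$ past brackets. The two structural facts that produce the required cancellations are that $\ker\huaP$ is a subalgebra (so brackets of elements of $\ker\huaP$ stay in $\ker\huaP$ and are killed by the outer $\huaP$ exactly when needed) and that $F$ is abelian (so any term in which the output of an inner $l_i$, which already lies in $F$, gets bracketed against another element of $F$ vanishes); after these reductions, a single final use of the graded Jacobi identity on the triple $(\Delta,\Delta,-)$ reassembles everything into the advertised term.

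The hard part — the genuine obstacle — is precisely this last computation: tracking the Koszul signs through the double sum over $(i,n-i)$-shuffles and verifying that every term not of the advertised shape cancels against a partner. This is lengthy but purely combinatorial, with no conceptual surprises, and since the statement is quoted verbatim from \cite{Vo} one may simply cite it there. A more transparent alternative is to package the family $\{l_k\}$ into a single degree $+1$ coderivation $Q$ of the cofree cocommutative coalgebra cogenerated by $F[1]$; the entire system of generalized Jacobi identities is then equivalent to the one equation $Q^2=0$, which one verifies by expressing $Q$ through $\mathrm{ad}_\Delta$ and $\huaP$ and reducing, once more, to $[\Delta,\Delta]=0$.
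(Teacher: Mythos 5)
The paper does not prove this theorem at all—it is quoted verbatim from \cite{Vo}—and your sketch faithfully reproduces Voronov's original argument: graded symmetry from the abelianness of $F$, and the generalized Jacobi identities via the master identity reducing the $n$-th Jacobiator to a derived bracket of $\tfrac12[\Delta,\Delta]$, using that $\ker\huaP$ is a subalgebra and $\Delta\in\ker(\huaP)^1$ with $[\Delta,\Delta]=0$. So your proposal is correct and takes essentially the same route as the cited proof (with the coderivation reformulation as a standard equivalent packaging).
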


\end{defi}
Let $\g$ be a vector space. We consider the graded vector space $$C^*(\g,\g)=\oplus_{n\ge 0}C^n(\g,\g)=\oplus_{n\ge 0}\Hom (\underbrace{\wedge^{2} \g\otimes \cdots\otimes \wedge^{2}\g}_{n}\wedge \g, \g).$$

\begin{thm}{\rm (\cite{NR bracket of n-Lie})}\label{thm:MCL}
The graded vector space  $C^*(\g,\g)$ equipped with the  graded commutator bracket
\begin{eqnarray}\label{3-Lie-bracket}
[P,Q]_{\Li}=P{\circ}Q-(-1)^{pq}Q{\circ}P,\quad \forall~ P\in C^{p}(\g,\g),Q\in C^{q}(\g,\g),
\end{eqnarray}
is a graded Lie algebra, where $P{\circ}Q\in C^{p+q}(\g,\g)$ is defined by

\begin{small}
\begin{equation*}
\begin{aligned}
&(P{\circ}Q)(\mathfrak{X}_1,\cdots,\mathfrak{X}_{p+q},x)\\
=&\sum_{k=1}^{p}(-1)^{(k-1)q}\sum_{\sigma\in \mathbb S(k-1,q)}(-1)^\sigma P\Big(\mathfrak{X}_{\sigma(1)},\cdots,\mathfrak{X}_{\sigma(k-1)},
Q\big(\mathfrak{X}_{\sigma(k)},\cdots,\mathfrak{X}_{\sigma(k+q-1)},x_{k+q}\big)\wedge y_{k+q},\mathfrak{X}_{k+q+1},\cdots,\mathfrak{X}_{p+q},x\Big)\\
&+\sum_{k=1}^{p}(-1)^{(k-1)q}\sum_{\sigma\in \mathbb S(k-1,q)}(-1)^\sigma P\Big(\mathfrak{X}_{\sigma(1)},\cdots,\mathfrak{X}_{\sigma(k-1)},x_{k+q}\wedge
Q\big(\mathfrak{X}_{\sigma(k)},\cdots,\mathfrak{X}_{\sigma(k+q-1)},y_{k+q}\big),\mathfrak{X}_{k+q+1},\cdots,\mathfrak{X}_{p+q},x\Big)\\
&+\sum_{\sigma\in \mathbb S(p,q)}(-1)^{pq}(-1)^\sigma P\Big(\mathfrak{X}_{\sigma(1)},\cdots,\mathfrak{X}_{\sigma(p)},
Q\big(\mathfrak{X}_{\sigma(p+1)},\cdots,\mathfrak{X}_{\sigma(p+q-1)},\mathfrak{X}_{\sigma(p+q)},x\big)\Big),\\
\end{aligned}
\end{equation*}
\end{small}
 for all $\mathfrak{X}_{i}=x_i\wedge y_i\in \wedge^2 \g$, $~i=1,2,\cdots,p+q$ and $x\in\g.$

  Moreover,  $\mu:\wedge^3\g\longrightarrow\g$ is a $3$-Lie bracket if and only if $[\mu,\mu]_{\Li}=0$, i.e.~$\mu$ is a Maurer-Cartan element of the graded Lie algebra $(C^*(\g,\g),[\cdot,\cdot]_{\Li})$.
  \end{thm}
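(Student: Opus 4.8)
The plan is to verify the two assertions of Theorem \ref{thm:MCL} separately, treating the graded Jacobi identity for $[\cdot,\cdot]_{\Li}$ and then the Maurer-Cartan reformulation of the Fundamental Identity. For the first part, the strategy is the standard one for Nijenhuis--Richardson type brackets: one introduces, for each $P\in C^p(\g,\g)$ and $Q\in C^q(\g,\g)$, the non-associative ``composition'' $P\circ Q$ exactly as displayed, and checks that $\circ$ is a (right) graded pre-Lie product, i.e.\ that the graded associator $(P\circ Q)\circ R-P\circ(Q\circ R)$ is graded symmetric in $Q$ and $R$ after the appropriate Koszul sign. Granting this, the graded antisymmetrization $[P,Q]_{\Li}=P\circ Q-(-1)^{pq}Q\circ P$ automatically satisfies the graded Jacobi identity, by the general fact that a graded pre-Lie algebra gives a graded Lie algebra under commutator. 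So the real content of part one is bookkeeping: carefully tracking the shuffle sums $\mathbb S(k-1,q)$, the two ``slots'' $x_{k+q}\wedge y_{k+q}$ and $x_{k+q}\wedge(\,\cdot\,)$ coming from the fact that a cochain is skew only in its last argument pair, and the sign $(-1)^{(k-1)q}$. I would organize this by splitting $(P\circ Q)\circ R$ according to whether $R$ is inserted into a slot of $P$, into a slot of $Q$ that has already been inserted into $P$, or ``in parallel'' (the last term of the formula), and matching each family of terms against the corresponding family in $P\circ(Q\circ R)$.

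For the second part, I would specialize to $P=Q=\mu\in C^1(\g,\g)=\Hom(\wedge^3\g,\g)$ (here $\wedge^2\g\wedge\g$ is identified with $\wedge^3\g$ since $\mu$ is fully skew), so that $p=q=1$ and $[\mu,\mu]_{\Li}=2\,\mu\circ\mu\in C^2(\g,\g)=\Hom(\wedge^2\g\otimes\wedge^2\g\wedge\g,\g)$. Unwinding the definition of $\circ$ for $p=q=1$, the shuffle sums are trivial and one gets, for $\mathfrak X_1=x_1\wedge y_1$, $\mathfrak X_2=x_2\wedge y_2$ and $x\in\g$,
\begin{eqnarray*}
(\mu\circ\mu)(\mathfrak X_1,\mathfrak X_2,x)
&=&\mu\big(\mu(x_1,y_1,x_2)\wedge y_2, x\big)+\mu\big(x_2\wedge\mu(x_1,y_1,y_2), x\big)\\
&&-\,\mu\big(\mathfrak X_1,\mu(x_2,y_2,x)\big).
\end{eqnarray*}
Writing this out with $\mu(a,b,c)=[a,b,c]$ and comparing with \eqref{eq:jacobi1}, one sees that $(\mu\circ\mu)(\mathfrak X_1,\mathfrak X_2,x)=0$ for all arguments is precisely the Fundamental Identity with $(x_1,x_2,x_3,x_4,x_5)$ replaced by $(x_1,y_1,x_2,y_2,x)$ (up to moving all terms to one side and using skew-symmetry of $[\cdot,\cdot,\cdot]$). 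Since $\mu$ is assumed skew-symmetric from the outset, $[\mu,\mu]_{\Li}=0$ is therefore equivalent to the Fundamental Identity, i.e.\ to $(\g,\mu)$ being a $3$-Lie algebra; and a Maurer-Cartan element of the graded Lie algebra $(C^*(\g,\g),[\cdot,\cdot]_{\Li})$ is by definition a degree-$1$ element $\mu$ with $[\mu,\mu]_{\Li}=0$, which closes the argument.

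The main obstacle is the pre-Lie identity in part one: the combinatorics of the shuffle permutations together with the doubled insertion slots is genuinely delicate, and getting every Koszul sign right (especially the interaction of $(-1)^{\sigma}$ with $(-1)^{(k-1)q}$ and $(-1)^{pq}$ when $R$ lands inside an inserted copy of $Q$) is where almost all the work lies. I would mitigate this either by citing \cite{NR bracket of n-Lie} for the graded Lie algebra structure itself and only verifying the Maurer-Cartan statement in detail, or, if a self-contained proof is wanted, by first checking the identity on ``homogeneous'' decomposable arguments and invoking multilinearity. Part two, by contrast, is a short direct computation once the $p=q=1$ case of $\circ$ is written down.
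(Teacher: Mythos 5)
The paper does not actually prove this statement: Theorem \ref{thm:MCL} is recalled from \cite{NR bracket of n-Lie}, so your plan of establishing the graded (pre-)Lie structure by the standard Nijenhuis--Richardson argument, or simply citing that reference for part one and verifying only the Maurer--Cartan statement, is consistent with how the paper treats it. The outline is sound, but the one computation you actually carry out contains a concrete error. For $p=q=1$ the third sum in the definition of $\mu\circ\mu$ runs over $\mathbb S(1,1)$, which has \emph{two} shuffles (the identity and the transposition), not one; you kept only the identity term $-\mu(\mathfrak X_1,\mu(x_2,y_2,x))$ and dropped the transposition term $+\mu(\mathfrak X_2,\mu(x_1,y_1,x))$. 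The correct unwinding is
\begin{equation*}
(\mu\circ\mu)(\mathfrak X_1,\mathfrak X_2,x)=[[x_1,y_1,x_2]_\g,y_2,x]_\g+[x_2,[x_1,y_1,y_2]_\g,x]_\g-[x_1,y_1,[x_2,y_2,x]_\g]_\g+[x_2,y_2,[x_1,y_1,x]_\g]_\g ,
\end{equation*}
and only with all four terms is $(\mu\circ\mu)=0$ equivalent to the Fundamental Identity \eqref{eq:jacobi1} (with $(x_1,x_2,x_3,x_4,x_5)=(x_1,y_1,x_2,y_2,x)$). As you wrote it, setting your three-term expression to zero asserts a strictly weaker identity missing the summand $[x_2,y_2,[x_1,y_1,x]_\g]_\g$, so the claimed equivalence fails; restoring the missing shuffle fixes it, and the rest of your part-two argument ($[\mu,\mu]_{\Li}=2\,\mu\circ\mu$, degree count, definition of Maurer--Cartan element in a graded Lie algebra) then goes through.

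On part one, be aware that you have only asserted, not verified, the graded right pre-Lie identity for $\circ$; since the sign bookkeeping there is exactly where such arguments usually go wrong, either carry it out in full or do what the paper does and attribute the graded Lie algebra structure to \cite{NR bracket of n-Lie}, reserving your own computation for the Maurer--Cartan characterization.
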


Let $\rho: \wedge^2\g\rightarrow \gl(\h)$ be an action of a $3$-Lie algebra $(\g,[\cdot,\cdot,\cdot]_{\g})$ on a $3$-Lie algebra $(\h,[\cdot,\cdot,\cdot]_{\h})$. For convenience, we use $\pi:\wedge^3\g\rightarrow\g$ to indicate the $3$-Lie bracket $[\cdot,\cdot,\cdot]_\g$ and $\mu:\wedge^3\h\rightarrow\h$ to indicate the $3$-Lie bracket $[\cdot,\cdot,\cdot]_\h$. In the sequel, we use  $\pi+\rho+\mu$ to denote the element in $\Hom(\wedge^3(\g\oplus \h),\g\oplus\h)$  given by
 \begin{equation}
(\pi+\rho+\mu)(x+u,y+v,z+w)=[x,y,z]_{\g}+\rho(x,y)w+\rho(y,z)u+\rho(z,x)v+[u,v,w]_\h,
\end{equation}
  for all $x,y,z\in\g,~u,v,w\in \h.$ Note that the right hand side is exactly the semidirect product 3-Lie algebra structure given in Proposition  \ref{lem:semi}.
 Therefore by Theorem \ref{thm:MCL}, we have $$[\pi+\rho+\mu,\pi+\rho+\mu]_{\Li}=0.$$

\begin{lem}\label{lem-equation-1}
Let $H:\g\rightarrow\h$ be a crossed homomorphism from a $3$-Lie algebra $(\g,[\cdot,\cdot,\cdot]_\g)$ to a $3$-Lie algebra $(\h,[\cdot,\cdot,\cdot]_\h)$ with respect to an action $\rho$. For all $x,y,z\in \g, u,v,w\in \h,$ we have
\begin{eqnarray*}
&&[[\pi+\rho+\mu,H]_{\Li},H]_{\Li}(x+u,y+v,z+w)\\
&=&2\Big([Hx,Hy,w]_{\h}+[Hx,v,Hz]_{\h}+[u,Hy,Hz]_{\h}\Big)
\end{eqnarray*}
\end{lem}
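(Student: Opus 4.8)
The plan is to compute both iterated brackets $[\pi+\rho+\mu,H]_{\Li}$ and then $[[\pi+\rho+\mu,H]_{\Li},H]_{\Li}$ directly from the formula \eqref{3-Lie-bracket} for the graded commutator and the explicit circle product given in Theorem \ref{thm:MCL}, keeping careful track of degrees: $\pi+\rho+\mu\in C^1(\g\oplus\h,\g\oplus\h)$ and $H\in C^0(\g\oplus\h,\g\oplus\h)$ (viewing $H$ as the map $\wedge^0(\g\oplus\h)\wedge(\g\oplus\h)\to\g\oplus\h$ that sends $x+u\mapsto Hx$, i.e. extended by zero on $\h$). Since $H$ has degree $0$, the sign $(-1)^{pq}$ in \eqref{3-Lie-bracket} is trivial when one of the arguments is $H$, so $[\pi+\rho+\mu,H]_{\Li}=(\pi+\rho+\mu)\circ H - H\circ(\pi+\rho+\mu)$, which lands in $C^1(\g\oplus\h,\g\oplus\h)$; and then $[[\pi+\rho+\mu,H]_{\Li},H]_{\Li}$ again lands in $C^1$, so evaluating it on a triple $(x+u,y+v,z+w)$ makes sense.

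\textbf{Key steps.} First I would record the first bracket: unwinding $(\pi+\rho+\mu)\circ H$ and $H\circ(\pi+\rho+\mu)$ on $(x+u,y+v,z+w)$, one gets the familiar ``deformation'' expression whose vanishing is precisely the crossed homomorphism identity \eqref{eq:crossed-homo} together with the action axioms — explicitly something like $[Hx,y,z]$-type terms replaced by the mixed $\rho$ and $\mu$ pieces. Concretely $[\pi+\rho+\mu,H]_{\Li}(x+u,y+v,z+w)$ should equal $H[x,y,z]_\g - \rho(x,y)(Hz)-\rho(y,z)(Hx)-\rho(z,x)(Hy)-[Hx,Hy,Hz]_\h$ plus the ``linear in one $\h$-slot'' terms such as $[Hx,Hy,w]_\h+\cdots$; the first group vanishes by Definition \ref{crossed-homo}, but the second group does not, and it is exactly this surviving part that, after applying $\circ H$ once more, produces the claimed right-hand side. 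Second, I would apply the circle product by $H$ a second time; because $H$ kills the $\h$-component and feeds its output $Hx$ into a single slot, each term of the form $[Hx,Hy,w]_\h$ (and cyclic analogues $[Hx,v,Hz]_\h$, $[u,Hy,Hz]_\h$) gets hit twice — once from $(\;)\circ H$ and once from $H\circ(\;)$ in the commutator — producing the overall factor $2$. All the purely $\g$-valued or triple-$H$ terms either vanish identically on the relevant arguments or cancel by skew-symmetry, and one should check that no extra terms survive by using that $\rho$ takes values in $\huaC(\h)$ and annihilates $[\cdot,\cdot,\cdot]_\h$ (equations \eqref{eq:action-1}, \eqref{eq:action-2}), which is what forces the nested $\rho$-then-$\mu$ and $\mu$-then-$\rho$ contributions to disappear.

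\textbf{Main obstacle.} The genuine difficulty is purely bookkeeping: correctly enumerating the shuffle terms $\sigma\in\mathbb S(k-1,q)$ and the three families of summands in the circle-product formula when $p=1,q=0$ (first bracket) and then $p=1,q=0$ again (second bracket), and tracking which slot the $\h$-elements $u,v,w$ occupy so that the coefficient comes out to exactly $2$ rather than $1$, $3$, or $6$. I expect the cleanest route is not to expand $\pi+\rho+\mu$ wholesale but to use bilinearity of $[\cdot,\cdot]_{\Li}$ to split $[[\pi+\rho+\mu,H]_{\Li},H]_{\Li}$ into a sum over the three pieces $\pi$, $\rho$, $\mu$, observe that $[[\pi,H]_{\Li},H]_{\Li}$ contributes nothing to the $\h$-component evaluated with $\h$-arguments present (since $\pi$ and $H$ together only ever produce $\g$-valued or $Hx$-type outputs with no room for $u,v,w$ to enter a bracket), that $[[\rho,H]_{\Li},H]_{\Li}$ vanishes by the action conditions, and that $[[\mu,H]_{\Li},H]_{\Li}(x+u,y+v,z+w)$ alone yields $2\big([Hx,Hy,w]_\h+[Hx,v,Hz]_\h+[u,Hy,Hz]_\h\big)$ — a short computation since $\mu$ is genuinely trilinear on $\h$ and $H$ simply substitutes $Hx$ for $x$ in one argument at a time. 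Assembling these three observations gives the stated identity.
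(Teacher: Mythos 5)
Your overall route --- expand the two iterated brackets via the explicit circle product, use that $H$ (extended by zero on $\h$) has degree $0$ so both commutators are plain differences, and split $\pi+\rho+\mu$ by bilinearity --- is the right one; the paper itself only says the lemma ``follows from straightforward computations.'' But two of your concrete intermediate claims are wrong and would derail the bookkeeping if followed literally. First, the first bracket on mixed arguments is
\[
[\pi+\rho+\mu,H]_{\Li}(x+u,y+v,z+w)=\rho(y,z)Hx+\rho(z,x)Hy+\rho(x,y)Hz-H[x,y,z]_\g+[Hx,v,w]_\h+[u,Hy,w]_\h+[u,v,Hz]_\h ,
\]
so the surviving ``mixed'' terms carry one $H$ and two plain $\h$-entries, not two $H$'s and one $\h$-entry as you state, and no $[Hx,Hy,Hz]_\h$ term occurs at this stage (it would only appear if one rewrites the $\rho$-terms using \eqref{eq:crossed-homo}, which is in fact never needed: the identity of the lemma holds for an arbitrary linear map $H$, and likewise \eqref{eq:action-1}--\eqref{eq:action-2} play no role --- there are no nested ``$\rho$-then-$\mu$'' contributions to kill, since the double bracket contains $\pi+\rho+\mu$ only once, with $H$ merely substituted into its slots).

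Second, and more importantly, your mechanism for the factor $2$ is incorrect. In $[[\pi+\rho+\mu,H]_{\Li},H]_{\Li}=[\pi+\rho+\mu,H]_{\Li}\circ H-H\circ[\pi+\rho+\mu,H]_{\Li}$ the second summand vanishes identically, because $[\pi+\rho+\mu,H]_{\Li}$ takes values in $\h$ and the extension of $H$ to $\g\oplus\h$ is zero on $\h$. The $2$ arises entirely inside $[\pi+\rho+\mu,H]_{\Li}\circ H$: substituting $H$ into each of the three argument slots of the displayed first bracket, each monomial $[Hx,Hy,w]_\h$, $[Hx,v,Hz]_\h$, $[u,Hy,Hz]_\h$ is produced from exactly two of the three slots (for instance $[Hx,Hy,w]_\h$ comes from the slots of $x+u$ and of $y+v$), while the $\rho$- and $H[\cdot,\cdot,\cdot]_\g$-terms die because their $\g$-slots receive $0$. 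Your cleaner decomposition at the end is correct in its conclusions --- $[[\pi,H]_{\Li},H]_{\Li}$ and $[[\rho,H]_{\Li},H]_{\Li}$ contribute nothing and $[[\mu,H]_{\Li},H]_{\Li}$ alone gives the right-hand side --- but note that $[[\rho,H]_{\Li},H]_{\Li}=0$ for the same elementary reasons ($\rho$ vanishes when a $\g$-slot is $0$, and $H$ kills $\h$), not because of the action axioms. With these corrections the computation closes as you intend.
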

\begin{proof}
  It follows from straightforward computations.
\end{proof}

\begin{pro}
Let $\rho: \wedge^2\g\rightarrow \gl(\h)$ be an action of a $3$-Lie algebra $(\g,[\cdot,\cdot,\cdot]_{\g})$ on a $3$-Lie algebra $(\h,[\cdot,\cdot,\cdot]_{\h})$. Then we have a $V$-data $(L,F,\huaP,\Delta)$ as follows:
\begin{itemize}
\item[$\bullet$] the graded Lie algebra $(L,[\cdot,\cdot])$ is given by $(C^*(\g\oplus \h,\g\oplus \h),[\cdot,\cdot]_{\Li})$;
\item[$\bullet$] the abelian graded Lie subalgebra $F$ is given by
$$
F=C^*(\g,\h)=\oplus_{n\geq 0}C^{n}(\g,\h)=\oplus_{n\geq 0}\Hom(\underbrace{\wedge^{2} \g\otimes \cdots\otimes \wedge^{2}\g}_{n}\wedge \g, \h);
$$
\item[$\bullet$] $\huaP:L\lon L$ is the projection onto the subspace $F$;
\item[$\bullet$] $\Delta=\pi+\rho+\mu.$
\end{itemize}
Consequently, we obtain an $L_\infty$-algebra $(C^*(\g,\h),l_1,l_3)$, where
\begin{eqnarray*}
l_1(P)&=&[\pi+\rho+\mu,P]_{\Li},\\
l_3(P,Q,R)&=&[[[\pi+\rho+\mu,P]_{\Li},Q]_{\Li},R]_{\Li},
\end{eqnarray*}
for all $P\in C^m(\g,\h),Q\in C^n(\g,\h)$ and $R\in C^k(\g,\h).$
\end{pro}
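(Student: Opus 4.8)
The plan is to verify the four defining conditions of a $V$-data for the quadruple $(L,F,\huaP,\Delta)$ and then invoke Theorem \ref{thm:db} to produce the $L_\infty$-algebra, finally identifying the surviving brackets. First I would recall that $(L,[\cdot,\cdot])=(C^*(\g\oplus\h,\g\oplus\h),[\cdot,\cdot]_{\Li})$ is a graded Lie algebra by Theorem \ref{thm:MCL}, so the first bullet is immediate. For the second bullet, I must check that $F=C^*(\g,\h)$, viewed inside $L$ via the inclusions $\g\hookrightarrow\g\oplus\h$ and $\h\hookrightarrow\g\oplus\h$, is an abelian graded Lie subalgebra: given $P\in C^m(\g,\h)$ and $Q\in C^n(\g,\h)$, the composite $P\circ Q$ feeds the $\h$-valued output of $Q$ into an argument slot of $P$; but $P$ only accepts arguments in $\g$ (it is defined on $\wedge^2\g\otimes\cdots\wedge\g$), so $P\circ Q=0$, and likewise $Q\circ P=0$, hence $[P,Q]_{\Li}=0$. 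This is the routine ``wrong type in the plugged slot'' argument and I would state it once.

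Next I would treat the projection $\huaP:L\to L$ onto $F$. The decomposition $\g\oplus\h$ induces a direct sum decomposition of each $C^n(\g\oplus\h,\g\oplus\h)$ according to how many inputs are taken from $\h$ versus $\g$ and whether the output lands in $\g$ or $\h$; $\huaP$ keeps only the component with all inputs in $\g$ and output in $\h$, which is exactly $F$, so $\huaP^2=\huaP$ and $\Img\huaP=F$. The real content is that $\Ker\huaP$ is a graded Lie subalgebra: I would argue that if $P,Q\in\Ker\huaP$ then $P\circ Q$ (and $Q\circ P$) cannot land in the ``all $\g$-inputs, $\h$-output'' component. This is a bookkeeping check on the three types of terms in the formula for $P\circ Q$ in Theorem \ref{thm:MCL}: plugging $Q$'s output into one of $P$'s slots either keeps an $\h$-input somewhere, or forces the output type to be inherited from $P$; in each case, being in the complement is preserved. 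For the fourth bullet, $\Delta=\pi+\rho+\mu$ lies in $\Ker(\huaP)^1$ because $\pi$ has output in $\g$ (not $\h$) and both $\rho$ and $\mu$ involve inputs from $\h$, so none of the three summands lies in $F$; and $[\Delta,\Delta]_{\Li}=0$ is precisely the statement, already recorded in the excerpt, that the semidirect product $\g\ltimes_\rho\h$ is a $3$-Lie algebra (Theorem \ref{thm:MCL} applied to $\pi+\rho+\mu$).

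Having established the $V$-data, Theorem \ref{thm:db} gives an $L_\infty$-algebra structure on $F=C^*(\g,\h)$ with brackets $l_k(P_1,\dots,P_k)=\huaP[\cdots[[\Delta,P_1]_{\Li},P_2]_{\Li},\dots,P_k]_{\Li}$. It remains to show that only $l_1$ and $l_3$ are (possibly) nonzero. I would compute degrees and types: $[\Delta,P]_{\Li}$ for $P\in F$ has a component in $F$ (this is $l_1(P)$), and the key observation is that for two elements $P,Q$ of $F$, the bracket $[[\Delta,P]_{\Li},Q]_{\Li}$ has no component in $F$, so $\huaP$ kills it and $l_2=0$; this is again a type-counting argument, essentially the same one that makes $F$ abelian, applied to the $\h$-valued pieces. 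Similarly, once one forms $[[[\Delta,P]_{\Li},Q]_{\Li},R]_{\Li}$ there can be an $F$-component (this is $l_3$), but bracketing once more against another element of $F$ again lands entirely outside $F$, so $l_k=0$ for $k\ge 4$. For $l_3$ itself, one notes that $\Delta=\pi+\rho+\mu$ has a quadratic-in-$\h$ part (namely $\mu$) and a part linear in $\h$ (namely $\rho$) and a part with no $\h$ (namely $\pi$); tracking which combinations survive $\huaP$ after three brackets against $\h$-valued maps pins down precisely the terms that remain, and Lemma \ref{lem-equation-1} is exactly the kind of intermediate computation that feeds this. I expect the main obstacle to be the careful type-and-degree bookkeeping that shows $l_2$ and all $l_{\ge 4}$ vanish: the graded Lie bracket formula in Theorem \ref{thm:MCL} has three families of terms, and one must check for each family that the output cannot simultaneously have all inputs in $\g$ and value in $\h$ after the relevant number of brackets — this is conceptually simple but requires being scrupulous about the three cases and the placement of the single $\h$-slot that $\rho$ contributes. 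Everything else is a direct appeal to Theorems \ref{thm:MCL} and \ref{thm:db}.
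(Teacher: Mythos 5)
Your proposal is correct and follows essentially the same route as the paper: verify the $V$-data (with $[\Delta,\Delta]_{\Li}=0$ coming from the semidirect product structure via Theorem \ref{thm:MCL}), invoke Voronov's higher derived brackets (Theorem \ref{thm:db}), and kill $l_2$ and $l_{k\geq 4}$ by the type/bookkeeping argument that the paper compresses into Lemma \ref{lem-equation-1} plus ``similarly''. You are in fact somewhat more explicit than the paper about the $V$-data axioms (abelianness of $F$ and closedness of $\ker\huaP$), which is a welcome, not a divergent, addition.
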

\begin{proof}
By Theorem \ref{thm:db}, $(F,\{l_k\}^{\infty}_{k=1})$ is an $L_{\infty}$-algebra, where $l_k$ is given by \eqref{V-shla}.
It is obvious that $\Delta=\pi+\rho+\mu \in \ker(\huaP)^{1}$. For all $P\in C^m(\g,\h),Q\in C^n(\g,\h)$ and $R\in C^k(\g,\h)$,
by Lemma \ref{lem-equation-1}, we have
\begin{eqnarray*}
&&[[\pi+\rho+\mu,P]_{\Li},Q]_{\Li}\in\ker(\huaP),
\end{eqnarray*}
which implies that $l_2=0$. Similarly,  we have $l_k=0,$ when $k\geq 4$.
Therefore, the graded vector space $C^*(\g,\h)$ is an $L_{\infty}$-algebra with nontrivial $l_1$, $l_3,$ and other maps are trivial.
\end{proof}

\begin{thm}\label{thm:infty-algebra}
Let $\rho: \wedge^2\g\rightarrow \gl(\h)$ be an action of a $3$-Lie algebra $(\g,[\cdot,\cdot,\cdot]_{\g})$ on a $3$-Lie algebra $(\h,[\cdot,\cdot,\cdot]_{\h})$. Then Maurer-Cartan elements of the $L_{\infty}$-algebra $(C^*(\g,\h),l_1,l_3)$ are precisely crossed homomorphisms from the $3$-Lie algebra $(\g,[\cdot,\cdot,\cdot]_\g)$ to the $3$-Lie algebra $(\h,[\cdot,\cdot,\cdot]_\h)$ with respect to the action $\rho$.
\end{thm}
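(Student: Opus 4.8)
The plan is to unwind the Maurer--Cartan equation for the $L_\infty$-algebra $(C^*(\g,\h),l_1,l_3)$ at a degree-$0$ element $H\in C^0(\g,\h)=\Hom(\g,\h)$ and check that it coincides term-by-term with the defining identity \eqref{eq:crossed-homo} of a crossed homomorphism. Since only $l_1$ and $l_3$ are nonzero, the equation \eqref{MC-equationL} reduces to the finite expression
\begin{eqnarray*}
l_1(H)+\frac{1}{3!}l_3(H,H,H)=0,
\end{eqnarray*}
that is, $[\pi+\rho+\mu,H]_{\Li}+\frac{1}{6}[[[\pi+\rho+\mu,H]_{\Li},H]_{\Li},H]_{\Li}=0$. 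So the whole proof is: evaluate the left-hand side on a triple $(x+u,y+v,z+w)\in\wedge^3(\g\oplus\h)$ and simplify.

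First I would compute $l_1(H)=[\pi+\rho+\mu,H]_{\Li}$ evaluated on $(x+u,y+v,z+w)$. Using the graded commutator bracket \eqref{3-Lie-bracket} together with the explicit formula for $P\circ Q$ from Theorem \ref{thm:MCL}, and keeping in mind that $H$ raises the $\h$-degree (it kills the $\g$-part trivially on inputs and outputs into $\h$), this bracket has two kinds of terms: $H$ applied to the $3$-bracket $\pi+\rho+\mu$ of the arguments, and $\pi+\rho+\mu$ applied to arguments one of which has been hit by $H$. After projecting onto $F=C^*(\g,\h)$ (which is automatic here since the output already lies in $\h$), and after restricting attention to the "diagonal" evaluation needed for the Maurer--Cartan equation, I expect
\begin{eqnarray*}
l_1(H)(x,y,z)=H[x,y,z]_{\g}-\rho(x,y)(Hz)-\rho(y,z)(Hx)-\rho(z,x)(Hy)
\end{eqnarray*}
up to a sign/normalization convention that I would fix carefully. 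Second, I would invoke Lemma \ref{lem-equation-1}, which already gives
\begin{eqnarray*}
[[\pi+\rho+\mu,H]_{\Li},H]_{\Li}(x+u,y+v,z+w)=2\big([Hx,Hy,w]_{\h}+[Hx,v,Hz]_{\h}+[u,Hy,Hz]_{\h}\big);
\end{eqnarray*}
bracketing once more with $H$ and then evaluating on $(x,y,z)$ (so that $u,v,w$ get replaced, after antisymmetrization, by $Hx,Hy,Hz$) should produce $6[Hx,Hy,Hz]_{\h}$, so that $\frac{1}{6}l_3(H,H,H)(x,y,z)=[Hx,Hy,Hz]_{\h}$. The key bookkeeping point is tracking the combinatorial factor: the third bracketing with $H$ replaces each of the three "linear" slots $u,v,w$ in turn, and the resulting three terms each equal $[Hx,Hy,Hz]_\h$, matching the $2$ already present in Lemma \ref{lem-equation-1} to give the factor $6$ that cancels $1/3!$.

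Assembling the two computations, the Maurer--Cartan equation becomes
\begin{eqnarray*}
H[x,y,z]_{\g}-\rho(x,y)(Hz)-\rho(y,z)(Hx)-\rho(z,x)(Hy)-[Hx,Hy,Hz]_{\h}=0,
\end{eqnarray*}
which is exactly \eqref{eq:crossed-homo}. Conversely, a crossed homomorphism satisfies this, hence is a Maurer--Cartan element. The main obstacle is purely computational: getting the signs and the $\mathbb S(k-1,q)$-shuffle coefficients in the formula for $P\circ Q$ right when $P=\pi+\rho+\mu\in C^1$ and $Q=H\in C^0$, and checking that all the "off-diagonal" terms (those genuinely involving the $\h$-inputs $u,v,w$ rather than their diagonal specializations) are handled correctly by the fact that the Maurer--Cartan equation is an identity of \emph{symmetric} multilinear maps and so is determined by its diagonal values; I would either cite Lemma \ref{lem-equation-1} to sidestep most of the $l_3$ computation, or note that polarization reduces everything to the displayed diagonal identity.
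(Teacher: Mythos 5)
Your proposal is correct and takes essentially the same route as the paper: reduce the Maurer--Cartan equation to $l_1(H)+\frac{1}{3!}l_3(H,H,H)=0$, compute $l_1(H)(x,y,z)$ directly and use Lemma \ref{lem-equation-1} to obtain $l_3(H,H,H)(x,y,z)=6[Hx,Hy,Hz]_{\h}$, then identify the resulting identity with \eqref{eq:crossed-homo}. The only caveat is the sign in your tentative formula for $l_1(H)$ (the paper's computation gives its negative, $\rho(y,z)(Hx)+\rho(z,x)(Hy)+\rho(x,y)(Hz)-H[x,y,z]_\g$), but since you flagged this and the equation is set to zero, the final characterization is unaffected.
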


\begin{proof}
It is straightforward to deduce that
\begin{eqnarray*}
&&[\pi+\rho+\mu,H]_{\Li}(x,y,z)=\rho(y,z)(Hx)+\rho(z,x)(Hy)+\rho(x,y)(Hz)-H\pi(x,y,z);\\
&& [[[\pi+\rho+\mu,H]_{\Li},H]_{\Li},H]_{\Li}(x,y,z)\\
&=&6\mu(Hx,Hy,Hz).
\end{eqnarray*}
Let $H$ be a Maurer-Cartan element of the $L_{\infty}$-algebra $(C^*(\g,\h),l_1,l_3)$. We have
\begin{eqnarray*}
&&\sum_{n=1}^{+\infty} \frac{1}{n!}l_n(H,\cdots,H)(x,y,z)\\
&=&[\pi+\rho+\mu,H]_{\Li}(x,y,z)+\frac{1}{3!}[[[\pi+\rho+\mu,H]_{\Li},H]_{\Li},H]_{\Li}(x,y,z)\\
&=&\mu(Hx,Hy,Hz)+\rho(y,z)(Hx)+\rho(z,x)(Hy)+\rho(x,y)(Hz)-H\pi(x,y,z)\\
&=&0,
\end{eqnarray*}
which implies that $H$ is a crossed homomorphism from the $3$-Lie algebra $(\g,[\cdot,\cdot,\cdot]_\g)$ to the $3$-Lie algebra $(\h,[\cdot,\cdot,\cdot]_\h)$ with respect to the action $\rho$.
\end{proof}

\begin{pro}
Let $H$ be a crossed homomorphism from a $3$-Lie algebra $\g$ to a $3$-Lie algebra $\h$ with respect to an action $\rho$.
 Then $C^*(\g,\h)$ carries a twisted $L_{\infty}$-algebra structure as following:
\begin{eqnarray}
\label{twist-rota-baxter-1}l_1^{H}(P)&=&l_1(P)+\frac{1}{2}l_3(H,H,P),\\
\label{twist-rota-baxter-2}l_2^{H}(P,Q)&=&l_3(H,P,Q),\\
\label{twist-rota-baxter-3}l_3^{H}(P,Q,R)&=&l_3(P,Q,R),\\
l^H_k&=&0,\,\,\,\,k\ge4,
\end{eqnarray}
where $P\in C^m(\g,\h),Q\in C^n(\g,\h)$ and $R\in C^k(\g,\h)$.
\end{pro}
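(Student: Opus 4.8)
The plan is to deduce this statement directly from Getzler's twisting procedure (Theorem~\ref{thm:twist}), applied to the $L_\infty$-algebra $(C^*(\g,\h),l_1,l_3)$ constructed in the previous proposition. By Theorem~\ref{thm:infty-algebra}, a crossed homomorphism $H$ is a Maurer-Cartan element of $(C^*(\g,\h),l_1,l_3)$. Hence Theorem~\ref{thm:twist} applies and produces an $L_\infty$-algebra $(C^*(\g,\h),\{l_k^H\}_{k\ge 1})$ whose structure maps are given by
\begin{eqnarray*}
l_k^H(P_1,\cdots,P_k)=\sum_{n=0}^{+\infty}\frac{1}{n!}\,l_{n+k}(\underbrace{H,\cdots,H}_n,P_1,\cdots,P_k),
\end{eqnarray*}
and the only thing left to do is to evaluate this sum explicitly in our situation.

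The key observation is that in $(C^*(\g,\h),l_1,l_3)$ one has $l_2=0$ and $l_k=0$ for all $k\ge 4$, so for a fixed $k$ the term indexed by $n$ in the sum above vanishes unless $n+k\in\{1,3\}$. I would then simply read off the four cases: for $k=1$ the surviving indices are $n=0$ and $n=2$, giving $l_1^H(P)=l_1(P)+\frac{1}{2}l_3(H,H,P)$, which is \eqref{twist-rota-baxter-1}; for $k=2$ only $n=1$ survives, giving $l_2^H(P,Q)=l_3(H,P,Q)$, which is \eqref{twist-rota-baxter-2}; for $k=3$ only $n=0$ survives, giving $l_3^H(P,Q,R)=l_3(P,Q,R)$, which is \eqref{twist-rota-baxter-3}; and for $k\ge 4$ every index $n\ge 0$ yields $n+k\ge 4$, so $l_k^H=0$.

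There is essentially no obstacle here: all the generalized Jacobi identities for $\{l_k^H\}$ come for free from Theorem~\ref{thm:twist}, and the computation is just bookkeeping with which terms of the infinite sum are nonzero. The only point that deserves a word of care is checking that $H$, viewed as an element of $C^1(\g,\h)$, indeed lies in the degree that makes it a legitimate Maurer-Cartan element in the sense required by the twisting theorem --- but this is precisely the content of Theorem~\ref{thm:infty-algebra}. It is also worth noting, as a byproduct, that the twisted structure genuinely has $l_3^H\neq 0$ in general, so it is a nontrivial $L_\infty$-algebra rather than a differential graded Lie algebra, in contrast with the Lie algebra case.
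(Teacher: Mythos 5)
Your proposal is correct and follows essentially the same route as the paper: the paper likewise invokes Theorem~\ref{thm:infty-algebra} to view $H$ as a Maurer-Cartan element of $(C^*(\g,\h),l_1,l_3)$ and then applies Getzler's twisting result (Theorem~\ref{thm:twist}), with the explicit formulas \eqref{twist-rota-baxter-1}--\eqref{twist-rota-baxter-3} obtained exactly as in your bookkeeping of which terms $l_{n+k}$ survive given $l_2=0$ and $l_k=0$ for $k\ge 4$. Your write-up is in fact slightly more detailed than the paper's one-line proof, and your case analysis of the twisted sum is accurate.
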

\begin{proof}
 Since $H$ is a Maurer-Cartan element of the $L_{\infty}$-algebra  $(C^*(\g,\h),l_1,l_3)$, by Theorem~\ref{thm:twist}, we have the conclusions.
\end{proof}

The above $L_\infty$-algebra controls deformations of crossed homomorphisms on $3$-Lie algebras.

\begin{thm}\label{thm:deformation}
Let $H:\g\rightarrow\h$ be a crossed homomorphism from a $3$-Lie algebra $(\g,[\cdot,\cdot,\cdot]_\g)$ to a $3$-Lie algebra $(\h,[\cdot,\cdot,\cdot]_\h)$ with respect to an action $\rho$. Then for a linear map $H':\g\rightarrow \h$, $H+H'$ is a crossed homomorphism if and only if $H'$ is a Maurer-Cartan element of the twisted $L_\infty$-algebra $(C^*(\g,\h),l_1^{H},l_2^{H},l_3^{H})$, that is $H'$ satisfies the Maurer-Cartan equation:
$$
l_1^{H}(H')+\frac{1}{2}l_2^{H}(H',H')+\frac{1}{3!}l_3^{H}(H',H',H')=0.
$$
\end{thm}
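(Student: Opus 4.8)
The plan is to obtain the statement as a formal consequence of Theorem~\ref{thm:infty-algebra} and Getzler's twisting construction (Theorem~\ref{thm:twist}), so that essentially no new computation is needed. Since $H\colon\g\to\h$ is a crossed homomorphism with respect to $\rho$, Theorem~\ref{thm:infty-algebra} guarantees that $H$, viewed as an element of $C^0(\g,\h)=\Hom(\g,\h)$, is a Maurer-Cartan element of the $L_\infty$-algebra $(C^*(\g,\h),l_1,l_3)$. This is exactly the input required to apply Theorem~\ref{thm:twist} with $\alpha=H$: for every $\alpha'\in C^0(\g,\h)$, the sum $H+\alpha'$ is a Maurer-Cartan element of $(C^*(\g,\h),l_1,l_3)$ if and only if $\alpha'$ is a Maurer-Cartan element of the twisted $L_\infty$-algebra $(C^*(\g,\h),\{l^{H}_k\}_{k\ge 1})$. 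By the proposition immediately preceding this theorem, the twisted brackets are precisely $l_1^{H}$, $l_2^{H}$, $l_3^{H}$ from \eqref{twist-rota-baxter-1}--\eqref{twist-rota-baxter-3}, while $l^{H}_k=0$ for $k\ge 4$.

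Next I would feed a general linear map $H'\colon\g\to\h$ into this picture. Both $H'$ and $H+H'$ lie in $C^0(\g,\h)$, so Theorem~\ref{thm:infty-algebra} applied to $H+H'$ shows that $H+H'$ is a crossed homomorphism from $\g$ to $\h$ with respect to $\rho$ if and only if $H+H'$ is a Maurer-Cartan element of $(C^*(\g,\h),l_1,l_3)$. Combining this with the equivalence of the previous paragraph, applied with $\alpha'=H'$, yields exactly the desired conclusion: $H+H'$ is a crossed homomorphism if and only if $H'$ is a Maurer-Cartan element of the twisted $L_\infty$-algebra $(C^*(\g,\h),l_1^{H},l_2^{H},l_3^{H})$.

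Finally I would unwind what being such a Maurer-Cartan element means for $H'$. Because $l^{H}_k=0$ for all $k\ge 4$, the Maurer-Cartan series $\sum_{n=1}^{+\infty}\frac{1}{n!}\,l^{H}_n(H',\cdots,H')=0$ truncates to
\[
l_1^{H}(H')+\frac{1}{2}l_2^{H}(H',H')+\frac{1}{3!}l_3^{H}(H',H',H')=0,
\]
which is the asserted equation. The only step that needs verification is this truncation, and it is immediate from the vanishing of the higher twisted brackets established in the preceding proposition; I do not anticipate any genuine obstacle here, since all of the structural content has already been packaged into Theorems~\ref{thm:infty-algebra} and~\ref{thm:twist} and the two propositions of this section.
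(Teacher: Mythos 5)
Your proof is correct and is essentially the paper's argument: both reduce the statement to Theorem \ref{thm:infty-algebra} (crossed homomorphisms are precisely the Maurer--Cartan elements of $(C^*(\g,\h),l_1,l_3)$) combined with twisting that $L_\infty$-algebra by the Maurer--Cartan element $H$. The only difference is that where you cite the ``Moreover'' clause of Theorem \ref{thm:twist} to convert the Maurer--Cartan equation for $H+H'$ into the twisted one for $H'$, the paper performs this conversion by hand, expanding $l_1(H+H')+\frac{1}{3!}l_3(H+H',H+H',H+H')=0$ by multilinearity and graded symmetry, subtracting $l_1(H)+\frac{1}{3!}l_3(H,H,H)=0$, and recognizing what remains as $l_1^{H}(H')+\frac{1}{2}l_2^{H}(H',H')+\frac{1}{3!}l_3^{H}(H',H',H')=0$; both routes are legitimate since Theorem \ref{thm:twist} is stated in the paper.
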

\begin{proof}
  By Theorem \ref{thm:infty-algebra}, $H+H'$ is a crossed homomorphism if and only if
  $$l_1(H+H')+\frac{1}{3!}l_3(H+H',H+H',H+H')=0.$$
Applying $l_1(H)+\frac{1}{3!}l_3(H,H,H)=0,$ the above condition is equivalent to
  $$l_1(H')+\frac{1}{2}l_3(H,H,H')+\frac{1}{2}l_3(H,H',H')+\frac{1}{6}l_3(H',H',H')=0.$$
That is, $l_1^{H}(H')+\frac{1}{2}l_2^{H}(H',H')+\frac{1}{3!}l_3^{H}(H',H',H')=0,$
\vspace{1mm}which implies that $H'$ is a Maurer-Cartan element of the twisted $L_\infty$-algebra $(C^*(\g,\h),l_1^{H},l_2^{H},l_3^{H})$.
\end{proof}

Next we give the relationship between the coboundary operator ${\rm d}_{\rho_{H}}$
 and the differential $l_1^{H}$ defined by ~\eqref{twist-rota-baxter-1} using the Maurer-Cartan element $H$ of the $L_{\infty}$-algebra  $(C^*(\g,\h),l_1,l_3)$.

 \begin{thm}\label{partial-to-derivation}
 Let $H$ be a crossed homomorphism from a $3$-Lie algebra $(\g,[\cdot,\cdot,\cdot]_\g)$ to a $3$-Lie algebra $(\h,[\cdot,\cdot,\cdot]_\h)$ with respect to an action $\rho$. Then we have
 $$
{\rm d}_{\rho_{H}} f=(-1)^{n-1}l_1^{H} f,\quad \forall f\in \Hom(\underbrace{\wedge^{2} \g\otimes \cdots\otimes \wedge^{2}\g}_{n-1}\wedge\g, \h),~n=1,2,\cdots.
 $$
\end{thm}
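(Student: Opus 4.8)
The plan is to verify the identity $\partial f = (-1)^{n-1} l_1^H f$ for $f \in \mathfrak C_{\Li}^{n-1}(\g;\h) = C^{n-1}(\g,\h)$ by unwinding both sides into the graded-commutator language of Theorem~\ref{thm:MCL}. On the left, $\partial f = {\rm d}_{\rho_H} f$ is the explicit $3$-Lie coboundary with coefficients in the representation $(\h;\rho_H)$, whose formula was written out after Lemma establishing that $\rho_H$ is a representation. On the right, by \eqref{twist-rota-baxter-1}, $l_1^H f = l_1(f) + \tfrac12 l_3(H,H,f) = [\pi+\rho+\mu, f]_{\Li} + \tfrac12 [[[\pi+\rho+\mu,H]_{\Li},H]_{\Li},f]_{\Li}$. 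So the whole statement reduces to the single operator identity
$$
[\pi+\rho+\mu, f]_{\Li} + \tfrac12 [[[\pi+\rho+\mu,H]_{\Li},H]_{\Li},f]_{\Li} = (-1)^{n-1}\,{\rm d}_{\rho_H} f
$$
on $C^{n-1}(\g,\h)$, which I would prove by expanding each bracket against arguments $\mathfrak X_1,\dots,\mathfrak X_{n-1},x_n$.

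First I would compute $[\pi+\rho+\mu, f]_{\Li}$. Since $f$ maps into $\h$ and $\pi$ involves only $\g$, the terms of $\pi \circ f$ and $f\circ \pi$ reproduce (up to the overall sign $(-1)^{n-1}$ coming from $[P,Q]_{\Li}=P\circ Q-(-1)^{pq}Q\circ P$ with $p=1$, $q=n-1$) exactly the ``$[x_i,y_i,x_k]$'', ``$[x_i,y_i,x_{n}]$'' and the final ``$\rho(y_{n-1},x_n)$, $\rho(x_n,x_{n-1})$'' terms of ${\rm d}_{\rho_H}f$, together with the $\rho(x_i,y_i)f(\cdots)$ terms coming from $\rho \circ f$ in the semidirect product structure; here one uses the concrete form of $\pi+\rho+\mu$ from Proposition~\ref{lem:semi}. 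This recovers precisely the ordinary $3$-Lie coboundary ${\rm d}_{\rho}f$ with respect to the \emph{undeformed} action $\rho$.

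Next I would compute $\tfrac12[[[\pi+\rho+\mu,H]_{\Li},H]_{\Li},f]_{\Li}$. By Lemma~\ref{lem-equation-1}, $[[\pi+\rho+\mu,H]_{\Li},H]_{\Li}$ is (twice) the degree-$1$ element of $C^*(\g\oplus\h,\g\oplus\h)$ sending $(x+u,y+v,z+w)$ to $[Hx,Hy,w]_\h+[Hx,v,Hz]_\h+[u,Hy,Hz]_\h$; call it $2\Theta_H$. Bracketing $\Theta_H$ with $f$ and pairing against $\mathfrak X_1,\dots,\mathfrak X_{n-1},x_n$, the composition $\Theta_H\circ f$ produces the terms $[Hx_i,Hy_i,f(\cdots)]_\h$ and the two $n$-th-slot terms $[Hy_{n-1},Hx_n,f(\cdots)]_\h$, $[Hx_n,Hx_{n-1},f(\cdots)]_\h$, while $f\circ\Theta_H$ vanishes because $\Theta_H$ takes values in $\h$ and $f$'s output slot is the last (odd) argument — matching the combinatorics already present in the displayed formula for ${\rm d}_{\rho_H}$. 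The factor $\tfrac12$ cancels the $2$ from Lemma~\ref{lem-equation-1}, and the signs $(-1)^{i+1}$, $(-1)^n$ track through the Koszul signs in the definition of $\circ$. Adding the two contributions and comparing slot-by-slot with the explicit formula for ${\rm d}_{\rho_H}f$ — noting that $\rho_H(x,y)u = \rho(x,y)u+[Hx,Hy,u]_\h$ exactly recombines the $\rho$-terms from the first computation with the $[H-,H-,-]_\h$-terms from the second — yields the claimed equality, including the global sign $(-1)^{n-1}$.

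The main obstacle is purely bookkeeping: correctly tracking the sign conventions and the shuffle sums $\sum_{\sigma\in\mathbb S(k-1,q)}$ in the definition of $P\circ Q$ when $q=n-1$ is large, and making sure the "$\wedge y_{k+q}$ versus $x_{k+q}\wedge$" splitting in the first two sums of the $\circ$-formula reproduces the symmetric pair of terms in ${\rm d}_{\rho_H}$. I expect no conceptual difficulty beyond Lemma~\ref{lem-equation-1}; the identity $l_1^H\circ l_1^H=0$ (hence $\partial\circ\partial=0$ in degrees $\ge 2$) then follows for free from the $L_\infty$-relations of $(C^*(\g,\h),l_1,l_3)$ twisted by the Maurer-Cartan element $H$, which is a sanity check on the signs. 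It therefore suffices to carry out the two bracket expansions above and match terms; I would present this as "a direct but lengthy computation" with the key intermediate identity being the expansion of $l_1(f)$ as ${\rm d}_\rho f$ and of $\tfrac12 l_3(H,H,f)$ as the correction terms turning $\rho$ into $\rho_H$.
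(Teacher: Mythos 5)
Your proposal is correct and takes essentially the same route as the paper's proof: expand $l_1(f)=[\pi+\rho+\mu,f]_{\Li}$ to recover the undeformed $\rho$-part of the coboundary, use Lemma \ref{lem-equation-1} to see that $\tfrac12 l_3(H,H,f)=\tfrac12[[[\pi+\rho+\mu,H]_{\Li},H]_{\Li},f]_{\Li}$ contributes exactly the $[H\cdot,H\cdot,\cdot]_{\h}$ correction terms turning $\rho$ into $\rho_H$, and match signs slot by slot with the explicit formula for ${\rm d}_{\rho_H}$. The only cosmetic difference is your justification for discarding the terms where $f$ receives the output of $[[\pi+\rho+\mu,H]_{\Li},H]_{\Li}$: the cleanest reason (implicit in the paper) is that this element vanishes on arguments taken purely from $\g$, which amounts to the same bookkeeping you describe.
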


\begin{proof}
For all $\mathfrak{X}_i=x_i\wedge y_i\in \wedge^2 \g,~ i=1,2,\cdots,n$ and $x_{n+1}\in \g$, we have
\begin{eqnarray*}
&&l_1(f)(\mathfrak{X}_1,\cdots,\mathfrak{X}_n,x_{n+1})\\
&=&[\pi+\rho+\mu,f]_{\Li}(\mathfrak{X}_1,\cdots,\mathfrak{X}_n,x_{n+1})\\
&=&\Big((\pi+\rho+\mu)\circ f-(-1)^{n-1}f\circ(\pi+\rho+\mu)\Big)(\mathfrak{X}_1,\cdots,\mathfrak{X}_n,x_{n+1})\\
&=&(\pi+\rho+\mu)(f(\mathfrak{X}_1,\cdots,\mathfrak{X}_{n-1},x_n)\wedge y_n,x_{n+1})\\
&&+(\pi+\rho+\mu)(x_n\wedge f(\mathfrak{X}_1,\cdots,\mathfrak{X}_{n-1},y_n), x_{n+1})\\
&&+\sum_{i=1}^{n}(-1)^{n-1}(-1)^{i-1}(\pi+\rho+\mu)(\mathfrak{X}_{i},f(\mathfrak{X}_1,\cdots,\hat{\mathfrak{X}_i},\cdots,\mathfrak{X}_n,x_{n+1}))\\
&&-(-1)^{n-1}\sum_{k=1}^{n-1}\sum_{i=1}^{k}(-1)^{i+1}f\Big(\mathfrak{X}_1,\cdots,\hat{\mathfrak{X}}_{i},\cdots,\mathfrak{X}_{k}, (\pi+\rho+\mu)(\mathfrak{X}_{i},x_{k+1})\wedge y_{k+1},\mathfrak{X}_{k+2},\cdots,\mathfrak{X}_{n},x_{n+1}\Big)\\
&&-(-1)^{n-1}\sum_{k=1}^{n-1}\sum_{i=1}^{k}(-1)^{i+1}f\Big(\mathfrak{X}_1,\cdots,\hat{\mathfrak{X}}_{i},\cdots,\mathfrak{X}_{k},x_{k+1}\wedge (\pi+\rho+\mu)(\mathfrak{X}_{i},y_{k+1}),\mathfrak{X}_{k+2},\cdots,\mathfrak{X}_{n},x_{n+1}\Big)\\
&&-(-1)^{n-1}\sum_{i=1}^{n}(-1)^{i+1}f\Big(\mathfrak{X}_1,\cdots,\hat{\mathfrak{X}}_{i},\cdots,\mathfrak{X}_{n},(\pi+\rho+\mu)(\mathfrak{X}_{i},x_{n+1})\Big)\\
&=&\rho(y_n,x_{n+1})f(\mathfrak{X}_{1},\cdots,\mathfrak{X}_{n-1},x_n)+\rho(x_{n+1},x_n)f(\mathfrak{X}_{1},\cdots,\mathfrak{X}_{n-1},y_n)\\
&&+\sum_{i=1}^{n}(-1)^{n-1}(-1)^{i-1}\rho(x_i,y_i)f(,\mathfrak{X}_{1},\cdots,\hat{,\mathfrak{X}_{i}},\cdots,,\mathfrak{X}_{n},x_{n+1})\\
&&-(-1)^{n-1}\sum_{k=1}^{n-1}\sum_{i=1}^{k}(-1)^{i+1}f\Big(\mathfrak{X}_1,\cdots,\hat{\mathfrak{X}}_{i},\cdots,\mathfrak{X}_{k}, \pi(x_i,y_i,x_{k+1})\wedge y_{k+1},\mathfrak{X}_{k+2},\cdots,\mathfrak{X}_{n},x_{n+1}\Big)\\
&&-(-1)^{n-1}\sum_{k=1}^{n-1}\sum_{i=1}^{k}(-1)^{i+1}f\Big(\mathfrak{X}_1,\cdots,\hat{\mathfrak{X}}_{i},\cdots,\mathfrak{X}_{k},x_{k+1}\wedge \pi(x_i,y_i,y_{k+1}),\mathfrak{X}_{k+2},\cdots,\mathfrak{X}_{n},x_{n+1}\Big)\\
&&-(-1)^{n-1}\sum_{i=1}^{n}(-1)^{i+1}f\Big(\mathfrak{X}_1,\cdots,\hat{\mathfrak{X}}_{i},\cdots,\mathfrak{X}_{n},\pi(x_i,y_i,x_{n+1})\Big).
\end{eqnarray*}
By Lemma \ref{lem-equation-1}, we have
\begin{eqnarray*}
&&l_3(H,H,f)(\mathfrak{X}_1,\cdots,\mathfrak{X}_n,x_{n+1})\\
&=&[[[\pi+\rho+\mu,H]_{\Li},H]_{\Li},f]_{\Li}(\mathfrak{X}_1,\cdots,\mathfrak{X}_n,x_{n+1})\\
&=&[[\pi+\rho+\mu,H]_{\Li},H]_{\Li}\Big(f(\mathfrak{X}_1,\cdots,\mathfrak{X}_{n-1},x_n)\wedge y_n,x_{n+1}\Big)\\
&&+[[\pi+\rho+\mu,H]_{\Li},H]_{\Li}\Big(x_n\wedge f(\mathfrak{X}_1,\cdots,\mathfrak{X}_{n-1},y_n),x_{n+1}\Big)\\
&&+\sum_{i=1}^{n}(-1)^{n-1}(-1)^{i-1}[[\pi+\rho+\mu,H]_{\Li},H]_{\Li}\Big(\mathfrak{X}_i, f(\mathfrak{X}_1,\cdots,\hat{\mathfrak{X}}_{i},\cdots,\mathfrak{X}_{n},x_{n+1})\Big)\\
&&-(-1)^{n-1}\sum_{k=1}^{n-1}\sum_{i=1}^{k}(-1)^{i+1}f\Big(\mathfrak{X}_1,\cdots,\hat{\mathfrak{X}}_{i},\cdots,\mathfrak{X}_{k},[[\pi+\rho+\mu,H]_{\Li},H]_{\Li}(\mathfrak{X}_{i},x_{k+1})\wedge y_{k+1}\\
&&+x_{k+1}\wedge [[\pi+\rho+\mu,H]_{\Li},H]_{\Li}(\mathfrak{X}_{i},y_{k+1}),\mathfrak{X}_{k+2},\cdots,\mathfrak{X}_{n},x_{n+1}\Big)\\
&&-(-1)^{n-1}\sum_{i=1}^{n}(-1)^{i+1}f\Big(\mathfrak{X}_1,\cdots,\hat{\mathfrak{X}}_{i},\cdots,\mathfrak{X}_{n},[[\pi+\rho+\mu,H]_{\Li},H]_{\Li}(\mathfrak{X}_{i},x_{n+1})\Big)\\
&=&2\Big([f(\mathfrak{X}_1,\cdots,\mathfrak{X}_{n-1},x_n),Hy_n,Hx_{n+1}]_{\h}+[Hx_n,f(\mathfrak{X}_1,\cdots,\mathfrak{X}_{n-1},y_n),Hx_{n+1}]_{\h}\\
&&+\sum_{i=1}^{n}(-1)^{n-1}(-1)^{i-1}[Hx_i,Hy_i,f(\mathfrak{X}_1,\cdots,\hat{\mathfrak{X}_{i}},\mathfrak{X}_{n},x_{n+1})]_{\h}\Big)
\end{eqnarray*}
Thus, we deduce that ${\rm d}_{H} f=(-1)^{n-1}\Big(l_1(f)+\frac{1}{2}l_3(H,H,f)\Big)$, that is  ${\rm d}_{H} f=(-1)^{n-1}l_1^{H} f$.
\end{proof}


 \end{document}